\definecolor{aleacolor}{rgb}{0.16,0.59,0.78}
\renewcommand{\cite}{\citet}
\theoremstyle{plain}
\newtheorem{theorem}{Theorem}[section]                                          
\newtheorem{lemma}[theorem]{Lemma}
\newtheorem{corollary}[theorem]{Corollary}
\newtheorem{conjecture}[theorem]{Conjecture}
\theoremstyle{definition}
\newtheorem{definition}[theorem]{Definition}
\theoremstyle{remark}
\makeatletter \@addtoreset{equation}{section} \makeatother
\newcommand{\aleaIndex}[1]{\href{http://alea.impa.br/english/index_v#1.htm}{\bf #1}}
\renewenvironment{proof}{\noindent{\textit{Proof:}}}{\hspace{2mm} $\square$}
\newenvironment{demon}[1]{\noindent{\textit{Proof of Theorem \ref{#1}:}}}{\hspace{2mm} $\square$}
\newcommand{\Z}{\mathbb{Z}}
\newcommand{\ind}{\hbox{{\small 1} \hspace*{-11pt} 1}}
\newcommand{\ep}{\epsilon}
\newcommand{\integer}[1]{\lceil #1 \rceil}
\DeclareMathOperator{\card}{card}
\DeclareMathOperator{\bin}{Binomial}
\begin{document}

\date{March 10, 2009; accepted March 8, 2010}
\keywords{Interacting particle systems, opinion dynamics, voter model, Axelrod model, social influence, confidence threshold, graph
 coloring, martingale, bond percolation.} 
\subjclass{60K35}

\author{N. Lanchier}
\address{Arizona State University\\\vskip .05cm
\noindent School of Mathematical and Statistical Sciences, Tempe, AZ 85287, USA.}
\email{lanchier@math.asu.edu}
\urladdr{\url{http://math.la.asu.edu/~lanchier}}
\title[Opinion dynamics with confidence threshold]{Opinion dynamics with confidence threshold: \\ an alternative to the Axelrod model} 

\begin{abstract}
 The voter model and the Axelrod model are two of the main stochastic processes that describe the spread of opinions on networks.
 The former includes social influence, the tendency of individuals to become more similar when they interact, while the latter
 also accounts for homophily, the tendency to interact more frequently with individuals which are more similar.
 The Axelrod model has been extensively studied during the past ten years based on numerical simulations.
 In contrast, we give rigorous analytical results for a generalization of the voter model that is closely related to the Axelrod
 model as it combines social influence and confidence threshold, which is modeled somewhat similarly to homophily.
 Each vertex of the network, represented by a finite connected graph, is characterized by an opinion and may interact with its
 adjacent vertices.
 Like the voter model, an interaction results in an agreement between both interacting vertices -- social influence -- but unlike
 the voter model, an interaction takes place if and only if the vertices' opinions are within a certain distance -- confidence threshold.
 In a deterministic static approach, we first give lower and upper bounds for the maximum number of opinions that can be
 supported by the network as a function of the confidence threshold and various characteristics of the graph.
 The number of opinions coexisting at equilibrium is then investigated in a probabilistic dynamic approach for the stochastic
 process starting from a random configuration.
 It is proved that, for large confidence thresholds, the probability of an ultimate consensus on any connected graph is
 strictly positive whereas, with high probability, any fraction of the opinions is retained by the dynamics in one dimension
 provided the confidence threshold is small enough.
\end{abstract}

\maketitle


\section{Introduction}
\label{sec:intro}

\indent Opinion and cultural dynamics are driven by social influence, which is the tendency of individuals to become
 more similar when they interact.
 At least when the number of interacting agents is finite and the network of interactions is connected, models including this
 aspect, such as the voter model introduced independently by \cite{clifford_sudbury_1973} and \cite{holley_liggett_1975},
 usually predict convergence to a global consensus.
 However, differences between individuals and groups persist in the real world.
 In his seminal paper \cite{axelrod_1997}, political scientist Robert Axelrod explains the diversity of opinions and cultures
 as a consequence of homophily, which is the tendency to interact more frequently with individuals which are more similar.
 In the Axelrod model, actors are characterized by a finite number of cultural features.
 In Axelrod's own words, the more similar an actor is to a neighbor, the more likely that actor will adopt one of the
 neighbor's traits.
 Interactions take place on a finite connected graph $G$ with vertex set $V$ and edge set $E$.
 Each vertex $x$ is characterized by a vector of $n$ cultural features, each of which having $m$ possible states,
 $$ c (x) \ = \ (f_1 (x), \ldots, f_n (x)) \ \ \hbox{where} \ \
    f_j (x) \in \{1, 2, \ldots, m \} \ \hbox{for} \ j = 1, 2, \ldots, n. $$
 At each update, a vertex, say $x$, is picked uniformly at random along with one of its neighbors, say $y$.
 Then, with a probability equal to the fraction of features vertices $x$ and $y$ have in common, one of the features for which
 states are different (if any) is selected uniformly at random, and the state of vertex $y$ is set equal to the state of vertex $x$
 for this cultural feature.
 Otherwise nothing happens.
 The system evolving in continuous-time with each ordered pair of neighbors becoming active at rate 1 can be modeled
 formally by the continuous-time Markov process whose state at time $t$ is a function $c_t$ that maps the vertex set of the graph
 into the set of cultures $\{1, 2, \ldots, m \}^n$, and whose dynamics are described by the Markov generator $L_A$ defined on
 the set of cylinder functions by
 $$ L_A \,g (c) \ = \ \sum_{x \in V} \ \sum_{y \sim x} \ \sum_{i = 1}^n \ \frac{1}{n} \ \bigg[\frac{f (x, y)}{1 - f (x, y)} \bigg] \
        \ind \{f_i (x) \neq f_i (y) \} \ [g (c_{x \to y}^i) - g (c)] $$
 where $y \sim x$ means that $x$ and $y$ are connected by an edge,
 $$ c_{x \to y}^i (y) \ = \ (f_1 (y), \ldots, f_{i - 1} (y), f_i (x), f_{i + 1} (y), \ldots, f_n (y)) $$
 and $c_{x \to y}^i (z) = c (z)$ for all $z \neq y$, and
 $$ f (x, y) \ = \ \frac{1}{n} \ \sum_{j = 1}^n \ \ind \{f_j (x) = f_j (y)\} $$
 is the fraction of traits vertices $x$ and $y$ share.
 Due to the finiteness of the graph, the system is always driven to an absorbing state:
 either an ordered configuration in which all the vertices share the same culture, or a disordered configuration in which
 different cultures coexist.
 The phase transition between these two regimes, which depends qualitatively and quantitatively upon the structure of the
 graph, the number of cultural features, and the number of states per feature, has been extensively studied by social
 scientists as well as statistical physicists based either on numerical simulations (see \citealp{gonzalez_all_2005, klemm_all_2003})
 or simple mean field treatments (see \citealp{castellano_all_2000, vazquez_redner_2007, vilone_all_2002}) ignoring the structure
 of the network, and we refer the reader to Section IV.A of \cite{castellano_fortunato_loreto_2009} for a review.

\begin{figure}[t]
\centering
\mbox{\subfigure[$\ep = 0$ (160,000)]  {\epsfig{figure = 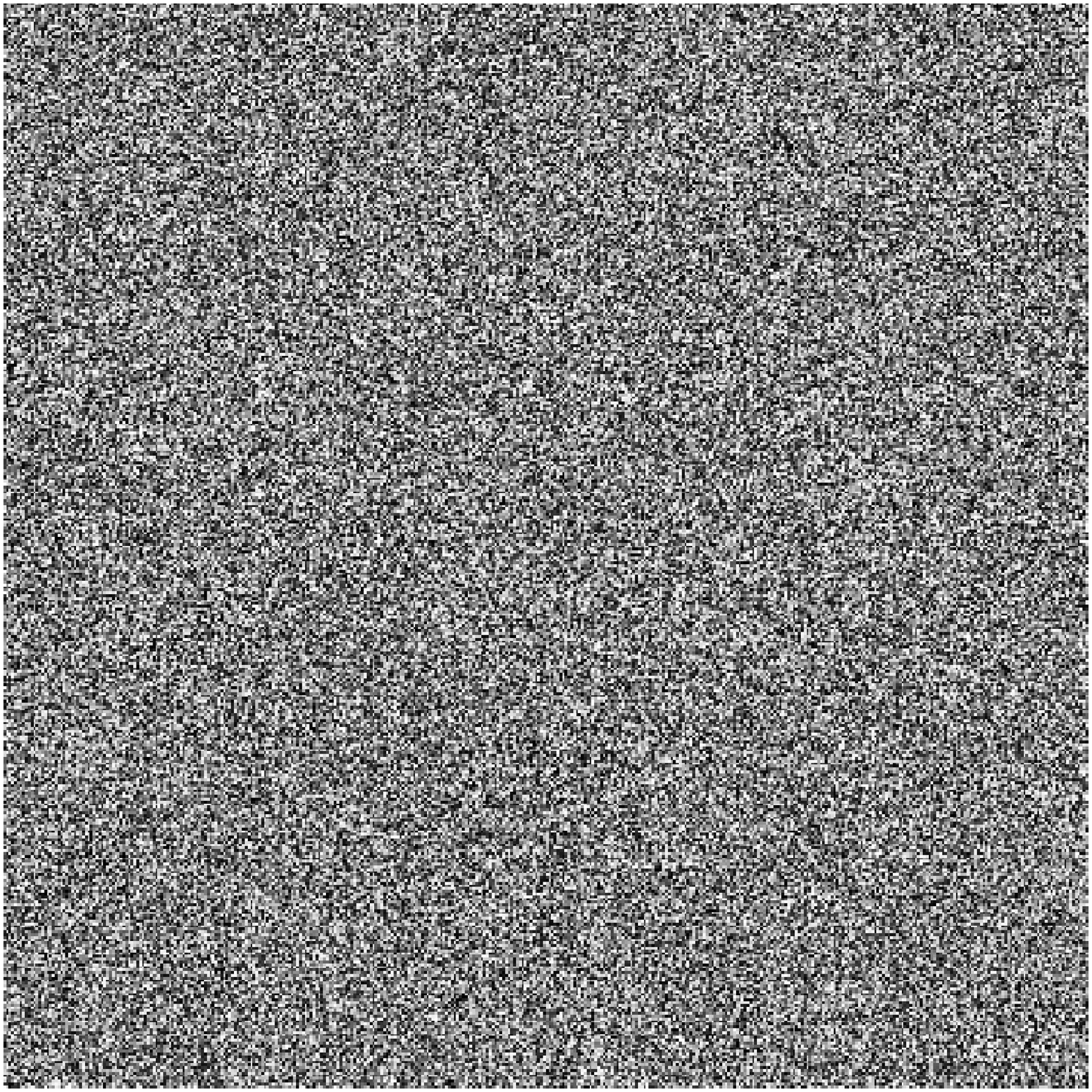, width = 150pt}} \hspace{5pt}
      \subfigure[$\ep = 1/5$ (22,126)] {\epsfig{figure = 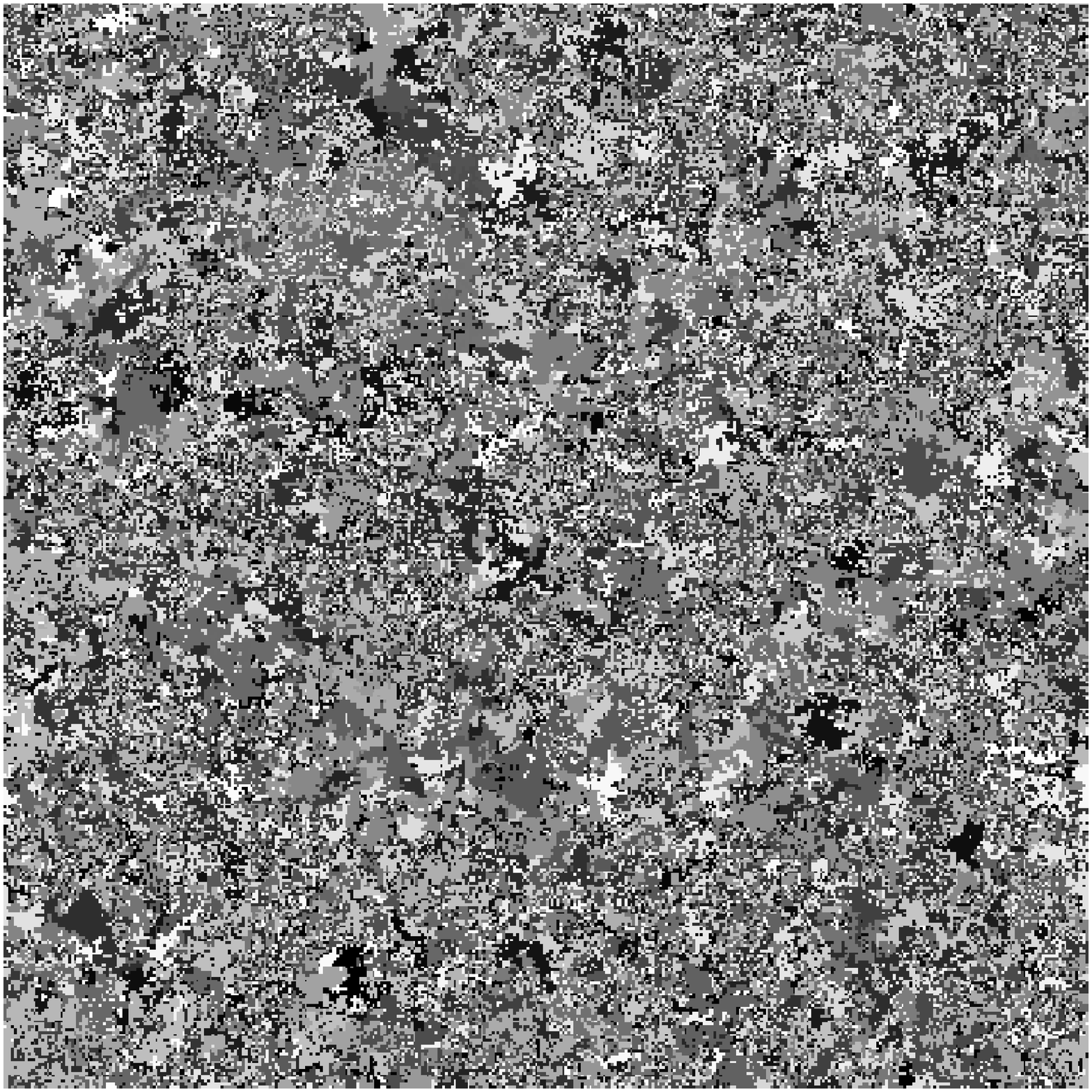, width = 150pt}}} \\ \vspace{-5pt}
\mbox{\subfigure[$\ep = 1/4$ (3,365)]  {\epsfig{figure = 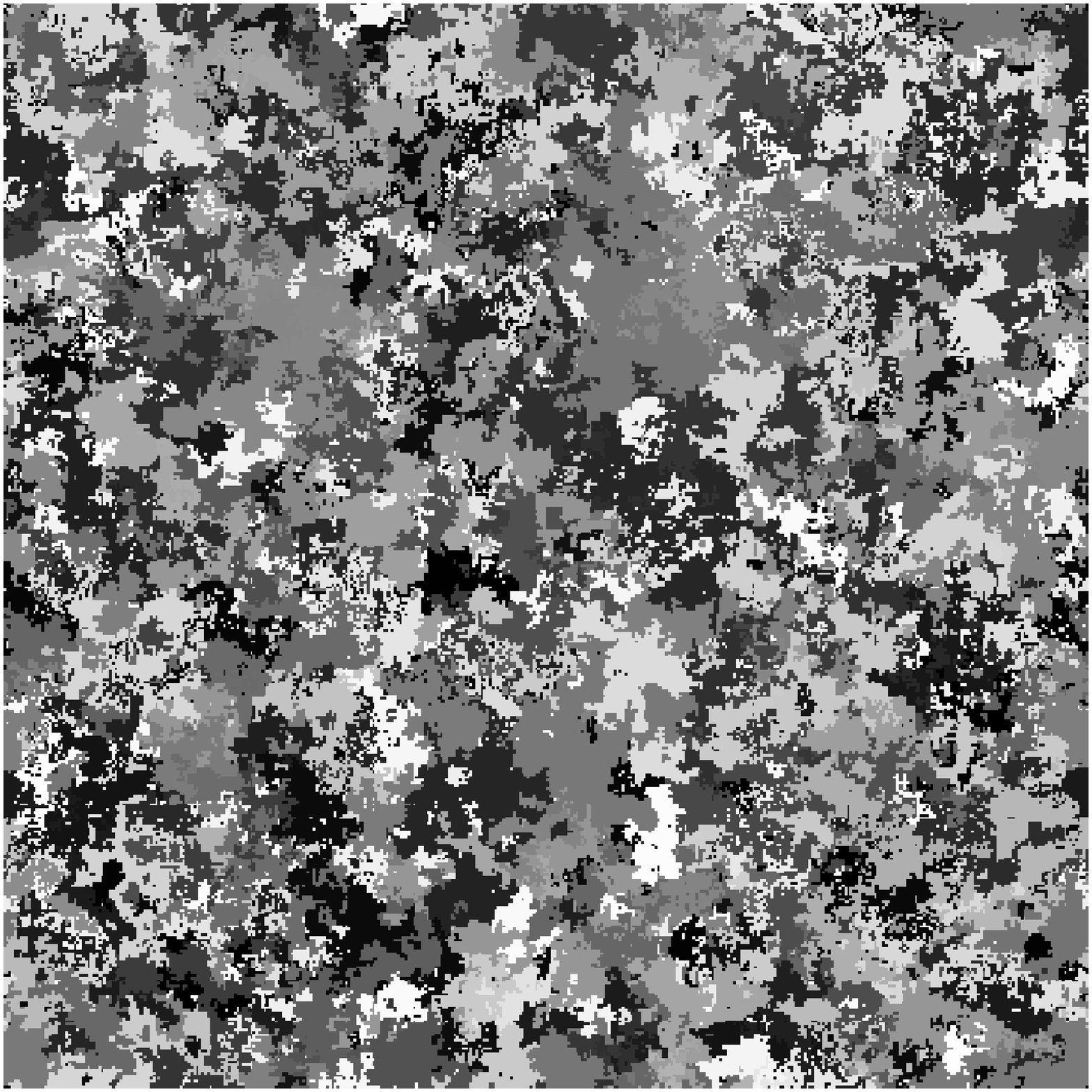, width = 150pt}} \hspace{5pt}
      \subfigure[$\ep = 1/3$ (594)]    {\epsfig{figure = 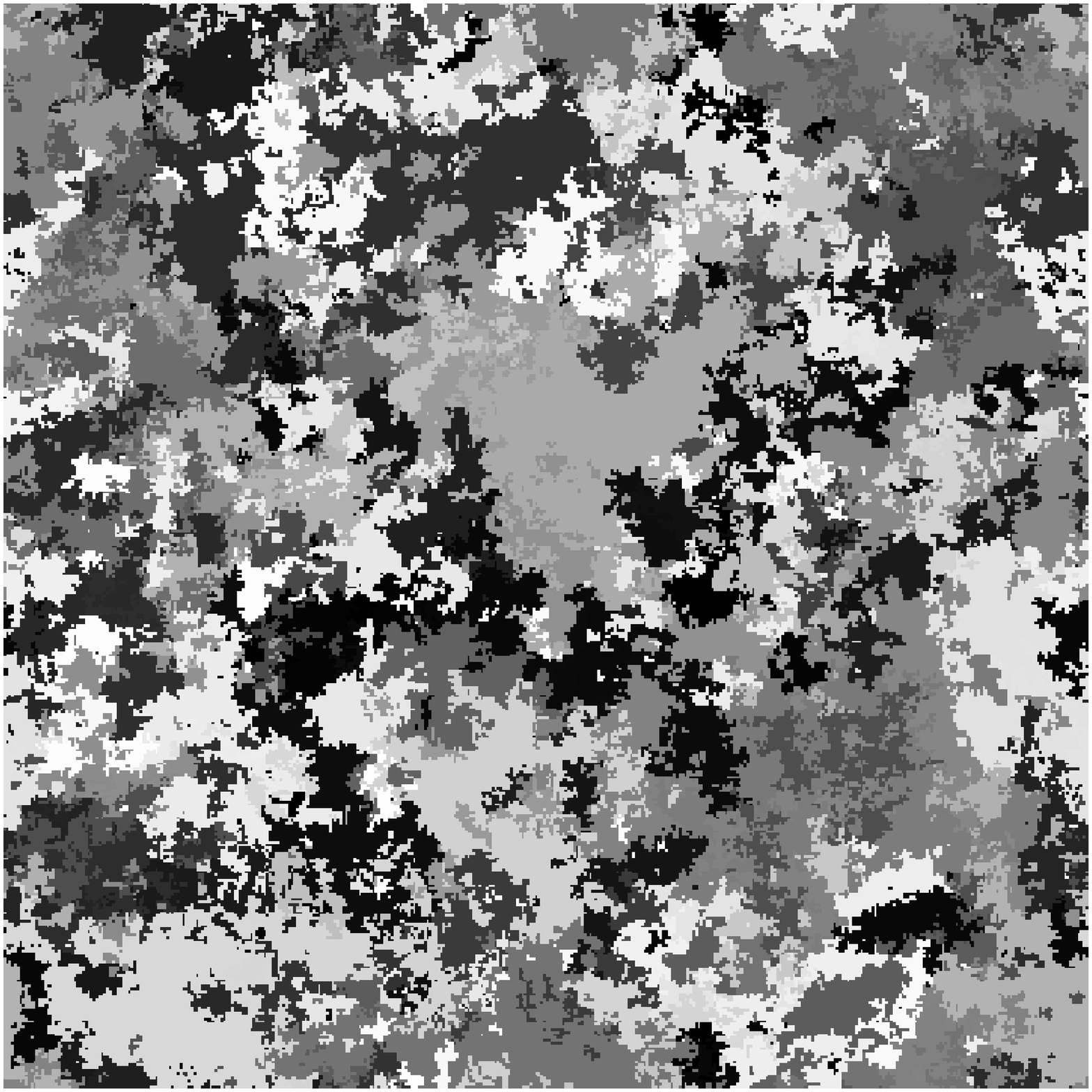, width = 150pt}}} \\ \vspace{-5pt}
\mbox{\subfigure[$\ep = 1/2$ (88)]     {\epsfig{figure = 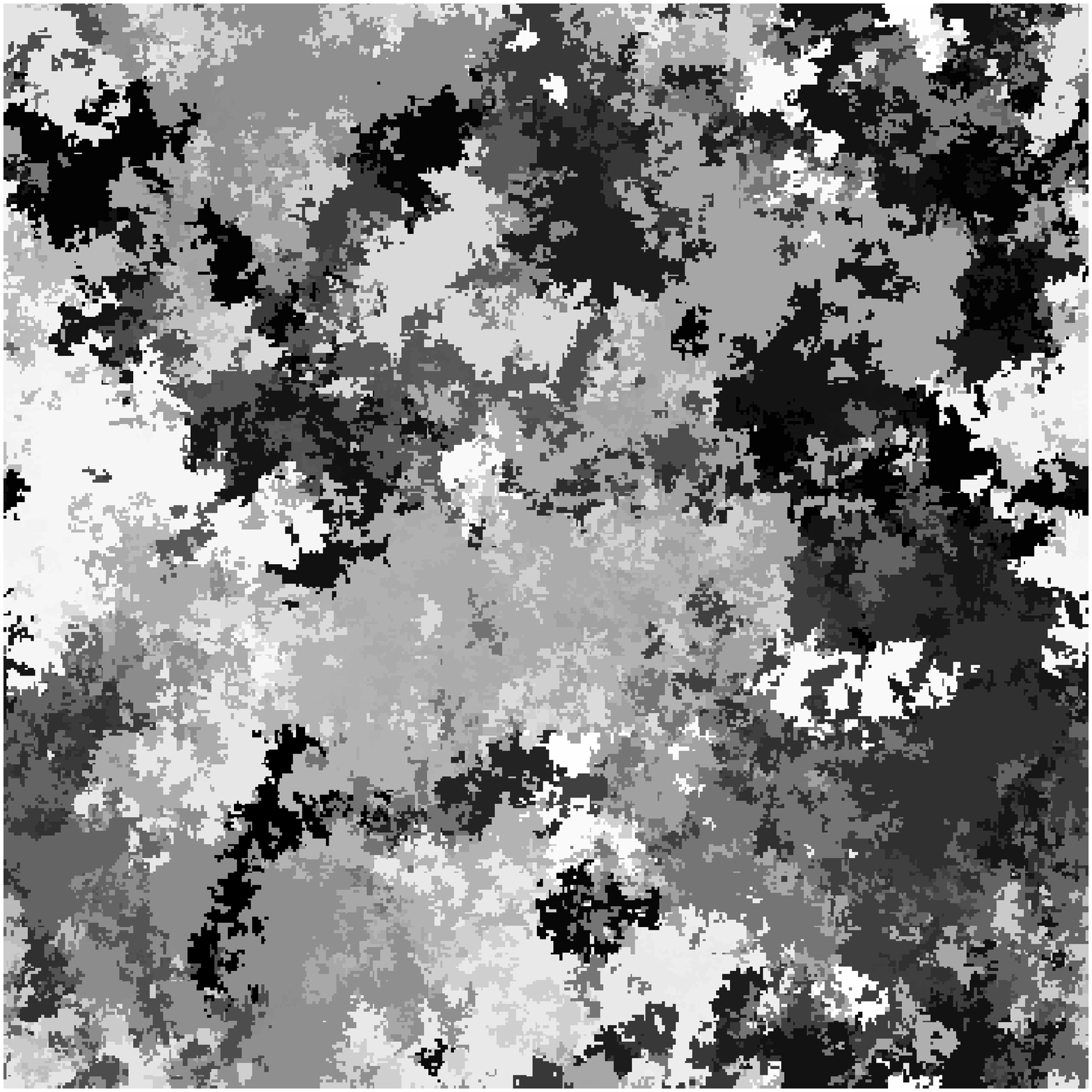, width = 150pt}} \hspace{5pt}
      \subfigure[$\ep = 1$ (55)]       {\epsfig{figure = 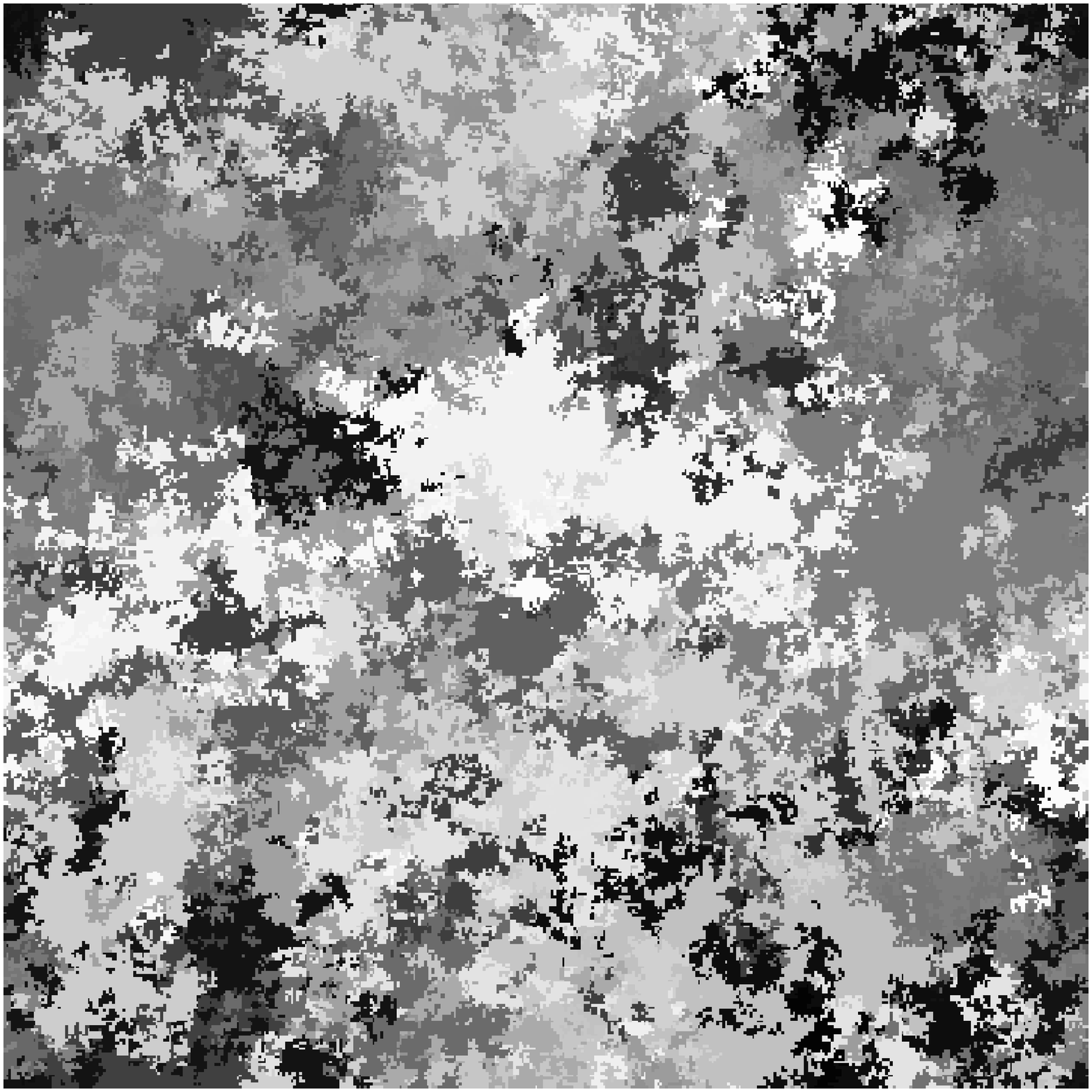, width = 150pt}}}
\caption{Snapshots of the process at time 10,000.}
\label{fig:thresholds}
\end{figure}

\indent While there is a lack of rigorous mathematical results for the Axelrod model, we prove analytically part of the
 results previously observed through numerical simulations as well as new results about the maximum number of opinions
 that can be supported by a graph for another variant of the voter model closely related to the Axelrod model which
 includes both social influence and a confidence threshold $\ep > 0$, i.e., pairs of actors whose opinion distance exceeds
 this threshold do not trust each other enough to mimic each other.
 The network of interactions is again described by a finite connected graph but each vertex is now characterized by an
 opinion taking values in $[0, 1]$.
 The opinion dynamics are similar to that of the voter model except that
 a vertex can mimic a neighbor only if the opinion distance between the vertex pair does not exceed the threshold $\ep$.
 The opinion dynamics are formally described by the continuous-time Markov process whose state at time $t$ is a function $\eta_t$
 that maps the vertex set into the opinion space $[0, 1]$, and whose Markov generator $L_V$ is defined on the set of cylinder
 functions by
 $$ L_V \,g (\eta) \ = \ \sum_{x \in V} \ \sum_{y \sim x} \ \ind \{|\eta (y) - \eta (x)| < \ep \} \ [g (\eta_{x \to y}) - g (\eta)] $$
 where $\eta_{x \to y}$ is obtained from $\eta$ by assigning the value $\eta (x)$ to vertex $y$.
 Note that, when the confidence threshold is set to $\ep = 0$, any configuration is an absorbing state regardless
 of the cardinality and topology of the network of interactions: the dynamics are frozen so that consensus are not possible.
 In contrast, when $\ep = 1$, one recovers the (multitype) voter model, in which case the duality with coalescing random walks
 implies that on any finite connected graph a total consensus is reached with probability 1.
 For results about consensus times of the voter model on the torus, we refer the reader to the article of \cite{cox_1989}.
 When $\ep \in (0, 1)$, the dynamics become more complex.
 Intuitively, agreements should emerge from the repeated interactions among agents, so one expects that the probability of a
 consensus is nondecreasing with respect to both the confidence threshold and the connectivity of the graph.
 This picture is supported by Figure \ref{fig:thresholds} that shows realizations of the process on a $400 \times 400$ lattice
 with periodic boundary conditions for different values of the confidence threshold $\ep$.
 Numbers between parenthesis at the bottom of each picture, which represent the number of opinions at time 10,000, strongly suggest
 that the expected number of opinions at equilibrium is nonincreasing with respect to $\ep$.
 It turns out that the probability of a consensus depends upon the initial configuration as well:
 we will prove that, whereas absorbing states with a large number of opinions can be artificially constructed in a deterministic
 fashion, starting from a random configuration in which the initial opinions are independent and uniformly distributed, the
 dynamics tend to greatly decrease the number of opinions present in the system, which is again supported by the numerical
 simulations of Figure \ref{fig:thresholds}.

\indent Before stating our results, recall that throughout this article $G = (V, E)$ denotes a finite connected graph with
 $N$ vertices.
 We employ the notation $\sim$ for the binary relation on the vertex set that indicates that two vertices are neighbors, i.e.,
 connected by an edge.
 Finally, given $\ep \in [0, 1]$, we let $P_{\ep}$ denote the law of the process with parameter $\ep$ starting from
 independent opinions chosen uniformly at random in the interval $[0, 1]$, and let $E_{\ep}$ be the corresponding expected value. \\


\indent{\bf Number of opinions at equilibrium -- static approach.}
 The first step is to look at the number of opinions that may coexist at equilibrium.
 Note that configurations in which all the agents share the same opinion, that we call consensus, are absorbing states, so
 the minimum number of opinions at equilibrium is trivially equal to 1.
 Let $\mu_{\ep} (G)$ denote the maximum number of opinions that may coexist at equilibrium, a quantity that we shall call
 the $\ep$-opinion index, or simply the opinion index, of the graph.
 A key tool to investigate the opinion index is to observe that the stationary distributions are exactly the ones supported
 on the set of absorbing states.
 This is intuitively obvious since the underlying graph is finite, but will be proved rigorously in the next section by applying
 the martingale convergence theorem.
 The problem thus reduces to finding all the \emph{static} configurations of the process, namely the
 configurations $\eta : V \longrightarrow [0, 1]$ such that
 $$ |\eta (x) - \eta (y)| > \ep \quad \hbox{or} \quad \eta (x) = \eta (y) \quad \hbox{for each edge} \ (x, y) \in E. $$
 The previous condition implies that, for any given graph $G$, the index $\mu_{\ep} (G)$ is nonincreasing with respect
 to the confidence threshold $\ep$.
 Moreover, we observe that the condition is always satisfied when the threshold $\ep = 0$, therefore $\mu_0 (G) = N$.
 In contrast, when $\ep = 1$, the process reduces to the multitype voter model that reaches eventually a consensus,
 therefore $\mu_1 (G) = 1$.

\indent For intermediate values of the confidence threshold, lower bounds of the opinion index can be expressed as a
 function of the chromatic number $\chi (G)$ of the graph.
 Recall that a graph is said to be $c$-colorable if one can color its vertices using at most~$c$ colors such that no two
 adjacent vertices receive the same color.
 The smallest number of colors needed to color a graph is called its chromatic number.
 Provided~$\ep < 1$, if a large graph can be colored using only few colors then one can construct absorbing states in which
 a large number of vertices (those colored with the same given color) have distinct opinions arbitrarily close to each other.
 In contrast, if the chromatic number is large, then one expects the number of opinions at equilibrium to be rather small.
 Precisely, by letting $\integer{a}$ be the least integer not less than $a$, we have the following theorem.

\begin{theorem}
\label{coloring}
 If $\ep < 1$ and $\chi = \chi (G)$ is the chromatic number of $G$ then
 $$ \begin{array}{rcl}
    \ep <    (\chi - 1)^{-1} & \Longrightarrow & \mu_{\ep} (G) \ = \ \card (V) \ = \ N. \vspace{4pt} \\
    \ep \geq (\chi - 1)^{-1} & \Longrightarrow & \mu_{\ep} (G) \ \geq \ \max (\integer{N / \chi} + 1, \integer{\ep^{-1}}). \end{array} $$
\end{theorem}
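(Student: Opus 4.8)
The plan is to prove the two implications by exhibiting, in each case, an explicit static configuration with the required number of distinct opinions. By the reduction recalled above, $\mu_\ep(G)$ is exactly the largest number of distinct values attained by a configuration $\eta$ satisfying $|\eta(x)-\eta(y)|>\ep$ or $\eta(x)=\eta(y)$ on every edge, and $\mu_\ep(G)\le N$ is automatic; so in each case it suffices to build one such $\eta$. Throughout I would fix a proper coloring of $G$ using exactly $\chi$ colors, with color classes $C_0,\dots,C_{\chi-1}$. Each $C_i$ is an independent set, so every edge of $G$ joins two distinct classes, and these are the only pairs of vertices on which the static condition imposes a constraint.

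For the first implication, assume $\chi\ge2$ (if $\chi=1$ the graph is edgeless and $\mu_\ep(G)=N$ trivially) and $\ep<(\chi-1)^{-1}$. I would choose a real number $a$ with $\ep<a<(\chi-1)^{-1}$, and then $\delta>0$ small enough that $\delta<a-\ep$ and $(\chi-1)a+\delta<1$ — both possible since $(\chi-1)a<1$ — and assign the vertices of $C_i$ pairwise distinct opinions in the window $[ia,ia+\delta]\subseteq[0,1]$. The windows are pairwise disjoint because $\delta<a$, so all $N$ opinions are distinct, and if $x\in C_i$, $y\in C_j$ with $i<j$ are adjacent then $\eta(y)-\eta(x)\ge(j-i)a-\delta\ge a-\delta>\ep$. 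Hence this $\eta$ is static and $\mu_\ep(G)=N$. (This is where the \emph{minimality} of $\chi$ is used: a coloring with more colors would require $\ep$ to lie below a strictly smaller threshold.)

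For the second implication I would establish the two lower bounds separately. To get $\mu_\ep(G)\ge\integer{N/\chi}+1$: by pigeonhole some class $C$ has $|C|\ge\integer{N/\chi}$; fix $v=(1+\ep)/2\in(\ep,1)$ and $\delta\in(0,v-\ep)$, give the vertices of $C$ pairwise distinct opinions in $[0,\delta]$, and give every vertex outside $C$ the opinion $v$. Same-class vertices are non-adjacent, a vertex of $C$ and a vertex of opinion $v$ differ by at least $v-\delta>\ep$, and two vertices of opinion $v$ are equal, so $\eta$ is static and displays $|C|+1\ge\integer{N/\chi}+1$ opinions. To get $\mu_\ep(G)\ge\integer{\ep^{-1}}$: set $k=\integer{\ep^{-1}}$; since $k-1<\ep^{-1}$, the $k$ equally spaced values $0,1/(k-1),\dots,1$ are pairwise more than $\ep$ apart, so \emph{every} labelling of the vertices by these values is automatically static — it only remains to see that $G$ has at least $k$ vertices so that each value is actually used, and indeed the hypotheses $\ep<1$ and $\ep\ge(\chi-1)^{-1}$ force $\chi\ge3$, while $\ep\ge(\chi-1)^{-1}$ gives $k=\integer{\ep^{-1}}\le\chi-1<\chi\le N$.

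All the computations are elementary; the only point that needs genuine care — and which I would regard as the crux — is the inequality bookkeeping. One must keep the \emph{strict} bound $>\ep$ in the static condition in mind throughout (it is $k-1<\ep^{-1}$ that supplies it in the equally spaced construction and $\delta<a-\ep$ that supplies it in the coloring construction), check that no window exceeds $[0,1]$, and verify the chain $\integer{\ep^{-1}}\le\chi-1\le N-1$ under the stated hypotheses so that the second lower bound is not vacuous. Once these are in place the proof closes.
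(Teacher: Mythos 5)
Your proposal is correct and takes essentially the same approach as the paper: an explicit absorbing configuration built from a proper $\chi$-coloring with opinions clustered in narrow disjoint windows for the first implication, a pigeonhole argument giving one large color class set against a single far-away opinion for the $\integer{N/\chi}+1$ bound, and the $\integer{\ep^{-1}}$ equally spaced values (which you verify directly, where the paper routes the same construction through its complete-graph and subgraph-monotonicity lemmas). The inequality bookkeeping you flag as the crux is exactly what the paper's choice of $\alpha$ accomplishes, and your version of it is sound.
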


\noindent Theorem \ref{coloring} combined with results from the theory of graph coloring gives us insight into how the
 topology of the graph may affect its opinion index.
 It is not the purpose of this article to review all the results of graph coloring, so we only give few applications.

\begin{corollary}
 If $G$ is a planar graph and $\ep < 1/3$ then $\mu_{\ep} (G) = N$.
\end{corollary}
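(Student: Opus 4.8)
The plan is to deduce this directly from Theorem \ref{coloring} together with the Four Color Theorem. First I would recall that the Four Color Theorem of Appel and Haken guarantees that every planar graph is $4$-colorable, so that $\chi = \chi (G) \leq 4$ for any planar graph $G$. Consequently $(\chi - 1)^{-1} \geq 1/3$, where in the degenerate cases $\chi \leq 1$ (an edgeless graph) the left-hand side is interpreted as $+ \infty$ and the conclusion $\mu_{\ep} (G) = N$ is anyway trivial since every configuration is then static.

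Next I would simply combine this with the hypothesis $\ep < 1/3$: chaining the two inequalities gives $\ep < 1/3 \leq (\chi - 1)^{-1}$, so the confidence threshold falls into the first regime of Theorem \ref{coloring}, namely $\ep < (\chi - 1)^{-1}$, and that implication yields at once $\mu_{\ep} (G) = \card (V) = N$. Note that only the \emph{first} implication of Theorem \ref{coloring} is needed; the lower bound in the second implication plays no role here.

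There is no real obstacle: the entire content has been pushed into the Four Color Theorem and into Theorem \ref{coloring}, which is assumed available. The only point deserving a word of care is the borderline case $\chi (G) = 4$, for which $(\chi - 1)^{-1} = 1/3$ exactly; here the \emph{strict} inequality $\ep < 1/3$ in the statement of the corollary is precisely what is required to remain within the hypothesis $\ep < (\chi - 1)^{-1}$ of Theorem \ref{coloring}. The remaining cases $\chi (G) \in \{ 2, 3 \}$ are even more comfortable, since then $(\chi - 1)^{-1} \in \{ 1, 1/2 \}$ is strictly larger than $1/3$.
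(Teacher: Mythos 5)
Your proposal is correct and follows exactly the paper's argument: invoke the Four Color Theorem to get $\chi (G) \leq 4$, hence $\ep < 1/3 \leq (\chi - 1)^{-1}$, and apply the first implication of Theorem \ref{coloring}. The extra care you take with the borderline case $\chi (G) = 4$ and the degenerate case of an edgeless graph is a welcome but inessential refinement of the same one-line deduction.
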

\begin{proof}
 This is a straightforward consequence of Theorem \ref{coloring} and the four color map theorem which states that any planar
 graph is 4-colorable.
\end{proof}

\begin{corollary}
 If $G$ is a triangle-free planar graph and $\ep < 1/2$ then $\mu_{\ep} (G) = N$.
\end{corollary}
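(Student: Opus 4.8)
The plan is to derive this exactly as in the planar case, namely by feeding the sharp chromatic bound for triangle-free planar graphs into Theorem \ref{coloring}. The ingredient I would invoke is Gr\"otzsch's theorem, which asserts that every triangle-free planar graph is $3$-colorable; hence $\chi = \chi (G) \leq 3$. If $G$ has no edges the conclusion $\mu_{\ep} (G) = N$ is immediate, so we may assume $\chi \geq 2$. Since $\chi \mapsto (\chi - 1)^{-1}$ is decreasing on the integers at least $2$, we obtain $(\chi - 1)^{-1} \geq (3 - 1)^{-1} = 1/2 > \ep$, so that $\ep < (\chi - 1)^{-1}$ and the first implication of Theorem \ref{coloring} yields $\mu_{\ep} (G) = \card (V) = N$.

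There is essentially no obstacle here: the corollary is a one-line consequence once Gr\"otzsch's theorem is granted, in the same way that the preceding corollary rested on the four color theorem. The only point deserving a moment's attention is the borderline case $\chi = 2$, where $(\chi - 1)^{-1}$ equals $1$ rather than strictly exceeding $1/2$; one simply notes that Theorem \ref{coloring} already operates under the standing hypothesis $\ep < 1$, which holds here since $\ep < 1/2$, so the deduction closes without any modification.
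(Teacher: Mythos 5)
Your proof is correct and follows exactly the paper's route: invoke Gr\"otzsch's theorem to get $\chi(G) \leq 3$, so $\ep < 1/2 \leq (\chi - 1)^{-1}$ and the first implication of Theorem \ref{coloring} gives $\mu_{\ep}(G) = N$. The extra attention you pay to the cases $\chi \leq 2$ is harmless but unnecessary, since $(\chi-1)^{-1}$ only gets larger there.
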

\begin{proof}
 This follows from Theorem \ref{coloring} and Gr\"otzsch's theorem which states that every triangle-free planar graph is 3-colorable.
\end{proof}

\begin{corollary}
\label{bipartite}
 Assume that $1/2 < \ep < 1$.
 Then, $\mu_{\ep} (G) = N$ if and only if the graph $G$ is bipartite.
\end{corollary}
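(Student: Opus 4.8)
The plan is to first recast the statement as a purely combinatorial fact about static configurations. Since $\mu_{\ep}(G)$ is the largest number of distinct opinions carried by a static configuration, one has $\mu_{\ep}(G) = N$ if and only if there is an \emph{injective} static map $\eta : V \longrightarrow [0,1]$, i.e.\ an injective $\eta$ with $|\eta(x) - \eta(y)| > \ep$ for every edge $(x,y) \in E$ (injectivity discards the alternative $\eta(x) = \eta(y)$ in the definition of a static configuration). So the corollary reduces to: such an $\eta$ exists if and only if $G$ is bipartite.

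For the ``if'' direction I would just quote the earlier theorem. If $G$ has a single vertex the claim is trivial; otherwise $G$, being connected and bipartite, has chromatic number $\chi(G) = 2$, hence $(\chi(G) - 1)^{-1} = 1 > \ep$, and Theorem~\ref{coloring} gives $\mu_{\ep}(G) = N$. (One can also exhibit $\eta$ by hand: writing $V = A \sqcup B$ for a bipartition, assign the vertices of $A$ distinct values in $[0, (1-\ep)/2)$ and those of $B$ distinct values in $((1+\ep)/2, 1]$; every edge joins $A$ to $B$, and any such pair of opinions differs by more than $\ep$, so $\eta$ is static and injective.)

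For the ``only if'' direction, suppose $\eta : V \longrightarrow [0,1]$ is static and injective; here is where the hypothesis $\ep > 1/2$ is used. Every edge $(x,y)$ then satisfies $|\eta(x) - \eta(y)| > 1/2$. In particular a vertex with opinion exactly $1/2$ could not be incident to any edge (its opinion would be within $1/2$ of every value in $[0,1]$), so, $G$ being connected with at least two vertices (the one-vertex case being trivial), we get $\eta(x) \neq 1/2$ for all $x$ and may set $A = \{x : \eta(x) < 1/2\}$, $B = \{x : \eta(x) > 1/2\}$, giving a genuine partition $V = A \sqcup B$. Two vertices whose opinions lie both in the interval $[0,1/2)$, or both in $(1/2,1]$, differ by strictly less than $1/2$ and so cannot be adjacent; hence every edge of $G$ runs between $A$ and $B$, i.e.\ $G$ is bipartite.

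The argument is short, and I do not anticipate a real obstacle beyond careful bookkeeping with the strict inequalities and the degenerate cases ($N=1$, and the endpoints $\ep = 1/2$ and $\ep = 1$, which are excluded by hypothesis). The one conceptual point worth flagging is \emph{why} the threshold $\ep > 1/2$ is what makes this work: it is exactly the condition forcing the ``low/high'' dichotomy about $1/2$ to be a \emph{proper} two-coloring, whereas for a smaller threshold one could place opinions near $0$, $1/2$ and $1$, obtaining only a proper three-coloring and no contradiction with the presence of an odd cycle.
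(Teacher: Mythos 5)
Your proof is correct. The forward direction (bipartite $\Rightarrow \mu_{\ep}(G)=N$) is identical to the paper's: both quote Theorem~\ref{coloring} with $\chi(G)=2$. For the converse, however, you take a genuinely different route. The paper argues by contraposition through the odd-cycle characterization of bipartiteness: if $G$ is not bipartite it contains a cycle of length $2n+1$, the pigeonhole principle places $n+1$ of its vertices with opinions in $[0,1/2]$ (or in $[1/2,1]$), two of these must be adjacent, and since $\ep>1/2$ a static configuration forces them to agree, so $\mu_{\ep}(G)<N$. You instead argue directly: an injective static configuration has $|\eta(x)-\eta(y)|>\ep>1/2$ along every edge, so thresholding at $1/2$ (after ruling out the value $1/2$ itself via connectedness) yields a proper $2$-coloring, hence a bipartition. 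The two arguments rest on the same observation -- that two opinions on the same side of $1/2$ cannot label adjacent vertices -- but yours applies it globally to exhibit the bipartition, avoiding the odd-cycle lemma entirely, and is arguably cleaner and more self-contained; the paper's version localizes the obstruction to a single odd cycle, which makes the failure mode for non-bipartite graphs more concrete. Your attention to the degenerate cases ($N=1$, the vertex with opinion exactly $1/2$) is more careful than the paper's, which glosses over them.
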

\begin{proof}
 If $G$ is bipartite then it is 2-colorable so $\mu_{\ep} (G) = N$ by Theorem \ref{coloring}.
 To prove the reverse, assume that the graph $G$ is not bipartite.
 Then it contains an odd cycle.
 Let $2n + 1$ be the length of this cycle.
 Without loss of generality, we may assume that at least $n + 1$ vertices in this cycle have opinion in $[0, 1/2]$,
 which induces the existence of two adjacent vertices with opinion in $[0, 1/2]$.
 Since $\ep > 1/2$, for the configuration to be an absorbing state, these two vertices must share the same opinion from which
 it follows that $\mu_{\ep} (G) < N$.
\end{proof}

\noindent In general, Theorem \ref{coloring} gives interesting results when considering graphs with small chromatic numbers,
 which gives rise to absorbing states with a large number of opinions.

\indent To understand the process on finite graphs with a large chromatic number such as complete graphs for
 which $\chi (G) = N$, we now give an upper bound of the opinion index $\mu_{\ep} (G)$.
 This upper bound is computed by considering a certain decreasing sequence of subgraphs in which each member is obtained
 from the previous one by removing its largest clique.
 Recall that a clique in a graph is a set of pairwise adjacent vertices inducing a subgraph which is a complete graph.
 Let $G$ be a finite graph, and $W \subset V$ its largest clique.
 We introduce the following notations:
\begin{enumerate}
 \item[--] The number of vertices in $W$ is denoted by $\omega (G)$. \vspace{4pt}
 \item[--] The subgraph of $G$ induced by the vertex set $V \setminus W$ is denoted by $\pi (G)$.
\end{enumerate}
 When the graph has more than one maximal clique, the subgraph $\pi (G)$ is not uniquely defined, and the upper bound
 in the next theorem is obtained by taking the minimum over all the possible choices of the sequence of subgraphs.

\begin{theorem}
\label{clique}
 Assume that $G = (V, E)$ is a finite connected graph. Then,
 $$ \begin{array}{rcl}
    \mu_{\ep} (G) & \leq & \displaystyle
    \min_{\hbox{\tiny choices}} \,(\min (\omega (G), \integer{\ep^{-1}}) \ + \ \mu_{\ep} (\pi (G))) \vspace{4pt} \\ & \leq &
    \displaystyle \min_{\hbox{\tiny choices}} \,(\min (\omega (G), \integer{\ep^{-1}}) \ + \ \cdots \ + \
    \min (\omega (\pi^{N - 1} (G)), \integer{\ep^{-1}})). \end{array} $$
\end{theorem}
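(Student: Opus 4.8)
The plan is to bound the number of distinct opinions in an \emph{arbitrary} static configuration of $G$ by splitting the vertex set into a largest clique $W$ and its complement, estimating the contribution of each piece separately, and combining the two estimates through a union bound; the telescoped inequality then follows by iterating this recursion along a sequence of clique removals.

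First I would fix a static configuration $\eta : V \longrightarrow [0, 1]$ and a largest clique $W \subset V$, so that $\card (W) = \omega (G)$. Since any two vertices of $W$ are joined by an edge and $\eta$ is static, any $x, y \in W$ with $\eta (x) \neq \eta (y)$ satisfy $|\eta (x) - \eta (y)| > \ep$; hence the values taken by $\eta$ on $W$ form a subset of $[0, 1]$ whose points are pairwise more than $\ep$ apart. If this set has $k$ elements then its diameter exceeds $(k - 1) \ep$, and since it is contained in $[0, 1]$ we get $(k - 1) \ep < 1$, that is, $k \leq \integer{\ep^{-1}}$. Combined with the trivial bound $k \leq \card (W)$ this yields
$$ \card (\eta (W)) \ \leq \ \min (\omega (G), \integer{\ep^{-1}}). $$

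Next, the restriction of $\eta$ to $V \setminus W$ is again a static configuration, now of the induced subgraph $\pi (G)$, because the static condition is an edge-local constraint inherited by any induced subgraph; therefore $\card (\eta (V \setminus W)) \leq \mu_{\ep} (\pi (G))$. Since every opinion present in the configuration is realized at some vertex of $W$ or of $V \setminus W$, the union bound gives
$$ \card (\eta (V)) \ \leq \ \card (\eta (W)) + \card (\eta (V \setminus W)) \ \leq \ \min (\omega (G), \integer{\ep^{-1}}) + \mu_{\ep} (\pi (G)). $$
Taking the supremum over all static configurations $\eta$ yields $\mu_{\ep} (G) \leq \min (\omega (G), \integer{\ep^{-1}}) + \mu_{\ep} (\pi (G))$, and since this holds for every admissible choice of largest clique (hence of $\pi (G)$), one may take the minimum over all such choices, which is the first inequality. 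For the second inequality I would iterate: applying the first bound along a fixed sequence of largest-clique removals gives $\mu_{\ep} (G) \leq \sum_{k = 0}^{N - 1} \min (\omega (\pi^k (G)), \integer{\ep^{-1}}) + \mu_{\ep} (\pi^N (G))$, and since each removal deletes at least one vertex, $\pi^N (G)$ has no vertices, so $\mu_{\ep} (\pi^N (G)) = 0$; minimizing over all sequences of clique removals then produces the stated telescoped bound.

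The computations are elementary, so there is no serious obstacle; the only points requiring a little care are the packing estimate producing the $\integer{\ep^{-1}}$ term — one should check the rounding is correct both when $\ep^{-1}$ is an integer and when it is not — and the bookkeeping in the iteration, in particular the fact that $\pi (G)$ need not be connected. The latter is harmless if one reads $\mu_{\ep}$ as the maximal number of opinions over static configurations of an arbitrary finite graph (equal, for a disconnected graph, to the sum over its connected components), since the restriction argument and the union bound above are insensitive to connectivity.
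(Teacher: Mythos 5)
Your proposal is correct and follows essentially the same route as the paper: the packing estimate on the clique is the paper's Lemma \ref{complete}, the restriction-plus-union-bound step is the paper's Lemma \ref{partition}, and the telescoping is the same induction over successive clique removals. Your explicit remark that $\pi (G)$ may be disconnected, and that $\mu_{\ep}$ must then be read as the maximum over static configurations of an arbitrary finite graph, is a point the paper leaves implicit, but it does not change the argument.
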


\noindent The sum is stopped at 
$N - 1$ because 
$\pi^i (G) = \varnothing$  
whenever $i \geq N$.
 As previously explained, while Theorem \ref{coloring} gives interesting lower bounds for graphs with small chromatic number,
 Theorem~\ref{clique} gives interesting upper bounds for graphs with large connectivity.
 In particular, the result turns out to be optimal in the case of the complete graph for
 which $\mu_{\ep} (G) = \min (N, \integer{\ep^{-1}})$. \\


\indent{\bf Number of opinions at equilibrium -- dynamic approach.}
 We now investigate the limiting number of opinions $\nu_{\ep} (G)$ that coexist at equilibrium in a \emph{dynamic}
 context, namely for the process starting from the configuration in which the initial opinions $\eta_0 (x)$, $x \in V$,
 are independent and uniformly distributed in the interval $[0, 1]$.
 It is important to point out that $\nu_{\ep} (G)$ is a random variable while the opinion index $\mu_{\ep} (G)$ is a
 deterministic upper bound.

\indent For any finite connected graph, one has $\nu_{\ep} (G) = \mu_{\ep} (G)$ when the confidence threshold is either equal
 to $\ep = 0$ or equal to $\ep = 1$.
 The opinion index, however, might be rather crude for intermediate values of the confidence
 threshold $\ep$.
 This is due to the fact that stable configurations with $\mu_{\ep} (G)$ opinions are often artificially constructed in a
 deterministic way and have in fact a small probability.
 For instance, the graph used to produce our simulation pictures is an example of bipartite graph therefore
 $\mu_{\ep} (G) = 400^2$ for all $\ep < 1$ according to Corollary \ref{bipartite}.
 In contrast, our simulation results indicate that for reasonably large values of the confidence threshold, most of the
 opinions are lost.
 In support to this picture, our next result gives a general lower bound for the probability of a consensus that holds
 regardless of the topology of the graph.

\begin{theorem}
\label{consensus}
 Assume that $\ep > 1/2$. Then,
 $$ P_{\ep} \,(\hbox{consensus}) \ = \ P_{\ep} \,(\nu_{\ep} (G) = 1) \ \geq \ 2 \ep - 1. $$
\end{theorem}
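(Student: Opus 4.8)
The plan is to combine an exact conservation law of the dynamics with a rigidity property of static configurations: on a connected graph, a static configuration that places an opinion in the central interval $(1-\ep,\ep)$ must already be a consensus. \textbf{Step 1 (a family of martingales).} For any bounded measurable $f:[0,1]\to\bb{R}$ set $g_f(\eta)=\sum_{x\in V}f(\eta(x))$. Since $\eta_{x\to y}$ differs from $\eta$ only at the vertex $y$, we have $g_f(\eta_{x\to y})-g_f(\eta)=f(\eta(x))-f(\eta(y))$, and in $L_V g_f(\eta)$ the ordered pairs $(x,y)$ and $(y,x)$ carry the common rate $\ind\{|\eta(y)-\eta(x)|<\ep\}$ with opposite increments, so they cancel and $L_V g_f\equiv 0$. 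As $V$ is finite, $t\mapsto g_f(\eta_t)$ is a bounded martingale. Using that on a finite graph the process is almost surely absorbed at a static configuration $\eta_\infty$ (the absorption fact established in the next section), we get $g_f(\eta_t)\to g_f(\eta_\infty)$ and hence
\[
 E_{\ep}\Big[\sum_{x\in V}f(\eta_\infty(x))\Big] \ = \ \sum_{x\in V}E_{\ep}\big[f(\eta_0(x))\big] \ = \ N\int_0^1 f(u)\,du ,
\]
the last equality because the $\eta_0(x)$ are independent and uniform on $[0,1]$.

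\textbf{Step 2 (rigidity of static configurations).} Suppose $\eta$ is static and $\eta(x)\in(1-\ep,\ep)$ for some $x\in V$. For any neighbour $y$ of $x$ one has $|\eta(x)-\eta(y)|\le\max(\eta(x),1-\eta(x))<\ep$, because $1-\ep<\eta(x)<\ep$; since $\eta$ is static, this forces $\eta(y)=\eta(x)$, so $y$ again lies in $(1-\ep,\ep)$. Propagating this along edges and using that $G$ is connected gives $\eta\equiv\eta(x)$, i.e.\ a consensus. Consequently the random variable $\sum_{x\in V}\ind\{\eta_\infty(x)\in(1-\ep,\ep)\}$ equals $N$ when the process reaches a consensus on a value lying in $(1-\ep,\ep)$ and equals $0$ in every other case; in particular it is everywhere bounded above by $N\,\ind\{\nu_{\ep}(G)=1\}$.

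\textbf{Step 3 (conclusion).} Apply Step 1 with $f=\ind_{(1-\ep,\ep)}$, for which $\int_0^1 f=2\ep-1$, and combine with the bound from Step 2:
\[
 N(2\ep-1) \ = \ E_{\ep}\Big[\sum_{x\in V}\ind\{\eta_\infty(x)\in(1-\ep,\ep)\}\Big] \ \le \ N\,P_{\ep}\big(\nu_{\ep}(G)=1\big),
\]
which is exactly $P_{\ep}(\nu_{\ep}(G)=1)\ge 2\ep-1$. The substantive content is the two observations above — that $\sum_x f(\eta_t(x))$ is conserved in expectation for \emph{every} bounded $f$, and that static configurations are rigid with respect to the central interval — while the only point demanding care is the passage from the martingale to $\eta_\infty$, namely that $\eta_t$ genuinely reaches a static configuration. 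This, however, is not an obstacle here: on a finite graph the set of opinions present never increases, so $\eta_t$ evolves in a finite state space and the required absorption is precisely what is proved in the next section; once it is in hand, the rest is an identity together with the trivial bound $\sum_x\ind\{\cdot\}\le N$.
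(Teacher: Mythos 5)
Your proof is correct and is essentially the paper's own argument in different packaging: your martingale $\sum_{x\in V}\ind\{\eta_t(x)\in(1-\ep,\ep)\}$ is exactly $N-\theta_t$, where $\theta_t$ is the paper's count of ``extremist'' agents with opinion outside $(1-\ep,\ep)$, and your rigidity step rests on the same geometric fact the paper uses, namely that an opinion in $(1-\ep,\ep)$ is within distance $\ep$ of every point of $[0,1]$, so an absorbing state containing such a vertex must be a consensus. The only differences are cosmetic: you obtain the martingale property from the generator rather than from the graphical representation, and you fold the paper's two-step endgame (first $\theta_\infty\in\{0,N\}$, then consensus on the event $\{\theta_\infty=0\}$ via the voter model) into a single rigidity statement about static configurations.
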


\noindent Theorem \ref{consensus} follows from an application of the optional sampling theorem that shows
 that the set of agents whose opinion lies outside $(1 - \ep, \ep)$, that can be seen as ``extremist'' agents,
 hits the empty set with probability $2 \ep - 1$.
 The proof uses the fact that all ``centrist'' agents, with opinion in the interval $(1 - \ep, \ep)$, may interact
 with all ``extremist'' agents.
 Once all agents are ``centrist'' the system evolves according to a voter model so the population reaches a consensus.

\indent We now look at the long-term behavior of the process when $\ep > 0$ is small.
 Observe first that, when the degree of the graph is uniformly bounded by a positive constant, say $K$, the number of edges
 in the graph is at most $NK$, hence
 $$ \begin{array}{rcl}
     P_{\ep} \,(\nu_{\ep} (G) \neq N) & \leq &
     P_{\ep} \,(|\eta_0 (x) - \eta_0 (y)| < \ep \ \hbox{for some} \ (x, y) \in E) \vspace{8pt} \\ & \leq &
    \displaystyle \sum_{x \sim y} \ P_{\ep} \,(|\eta_0 (x) - \eta_0 (y)| < \ep) \ \leq \ 2 \ep \,\card (E) \ \leq \
     2 \ep \,N K. \end{array} $$
 In particular, with probability close to 1, the number of opinions coexisting at equilibrium is equal to the number
 $N$ of vertices provided $\ep = o (1/N)$.
 Note that the upper bound above depends on the number of vertices, which is necessary for the number of opinions
 coexisting at equilibrium to be exactly equal to $N$, or equivalently, for the initial state to be an absorbing state.
 This is due to the fact that, when $\ep > 0$ does not depend on $N$, with probability close to 1 when $N$ is large, the
 number of opinions at equilibrium will be strictly less than $N$ since one can find with high probability at least one
 edge whose endpoints are initially within opinion distance $\ep$.
 Such edges, however, are minoriy, so we conjecture that, with probability close to 1 when $N$ is large, any given
 fraction $c < 1$ of the opinions can ultimately be retained by the dynamics provided $\ep$ lies below a positive constant
 that only depends upon $c$ and the local topology of the graph.
\begin{conjecture}
\label{conjecture}
 For any $c < 1$, there exists $\ep_0 (c, K) \in (0, 1)$ such that, uniformly over all graphs with degree at most $K$
 and number of vertices $N$, we have
 $$ P_{\ep} \,(\nu_{\ep} (G) < c N) \ \leq \ C \exp (- \gamma N) \quad \hbox{for all} \ \ep < \ep_0 (c, K) $$
 and suitable constants $C > 0$ and $\gamma \in (0, \infty)$ that only depends on $K$ and $\ep$.
\end{conjecture}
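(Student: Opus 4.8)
The plan is to show that, for $\ep$ small, the random initial configuration is so close to absorbing that with probability $1-Ce^{-\gamma N}$ the set of vertices which are \emph{ever involved in an active interaction} has size at most $(1-c)N$. Since a vertex that is never involved retains its initial opinion forever, and the initial opinions are a.s.\ pairwise distinct, this immediately gives $\nu_\ep(G)\ge cN$ on that event. The estimate to be proved is therefore: the number of ``contaminated'' vertices is at most $\delta(\ep,K)\,N$ with exponentially small exceptional probability, where $\delta(\ep,K)\to 0$ as $\ep\to 0$ for fixed $K$; one then takes $\ep_0(c,K)$ so small that $\delta(\ep_0,K)<1-c$.

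First I would isolate the primary mechanism. Call an edge $(x,y)\in E$ \emph{primary} if $|\eta_0(x)-\eta_0(y)|<\ep$; conditionally on $\eta_0(x)$, each of the at most $K$ edges at $x$ is primary with probability at most $2\ep$, so $\card$ of primary edges is at most $2\ep\,\card(E)\le\ep KN$ in expectation, with Gaussian tails by bounded differences (each $\eta_0(x)$ affects at most $K$ edges). If there were no \emph{secondary} activations, only the $\le 2\ep KN$ endpoints of primary edges could ever change opinion, and $2\ep K<1-c$ would already finish the proof. The real content is controlling secondary activations, i.e.\ the cascade by which a vertex that has adopted a neighbour's opinion may become compatible with further neighbours. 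Here the key structural remark is that the successive values $\eta_0(v)=v_0,v_1,v_2,\dots$ held by a fixed vertex $v$ satisfy $|v_{i+1}-v_i|<\ep$ (an active update replaces $v$'s value by one within $\ep$ of it), so after $m$ changes $v$'s value has drifted less than $m\ep$ from $\eta_0(v)$; recruiting a neighbour whose opinion is at $[0,1]$-distance $\gg\ep$ from $\eta_0(v)$ forces $v$ to have changed many times, which forces a long prior chain of active interactions, each ``paying'' a factor of order $\ep$. I would make this quantitative by exploring the contamination cluster of a primary edge in rounds: start from its endpoints, and adjoin a new vertex $u$ only together with a witnessing chain of interactions that could render $u$ active, revealing the relevant initial opinions one chain at a time. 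The goal is a stochastic domination of the cluster growth by a subcritical branching process with mean offspring $\lambda(\ep,K)\to 0$; subcriticality then yields exponential tails for the cluster size, uniformly over graphs of degree at most $K$, and summing over the $O(\ep N)$ primary edges (a standard large-deviation bound for a sum of clusters with exponential tails) gives the required $\delta(\ep,K)\,N$ bound.

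The hard part is precisely this last domination. Unlike ordinary bond percolation, the set of active edges is not a frozen i.i.d.\ configuration but co-evolves with the dynamics, so the exploration is not a Markov chain on independent edge states, and the heuristic ``each extra recruitment costs $O(\ep)$'' must be turned into an honest bound: one must rule out conspiracies in which many short chains reinforce each other, and control the number of distinct values a single vertex can take. I expect this to require a genuine multi-scale or space-time construction — comparing the spread of contamination with an oriented percolation on $V\times[0,\infty)$ whose density of open sites is $O(\ep)$ — together with a careful revealing scheme that keeps the conditioning manageable. That is where essentially all the difficulty of the conjecture lies; the reduction to a cluster estimate and the large-deviation step are routine once the domination is in place.
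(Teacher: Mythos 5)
The statement you are addressing is stated in the paper as a \emph{conjecture}, not a theorem: the author offers only a heuristic (a Peierls/bond-percolation argument showing that the clusters of initially $\ep$-compatible vertices are typically small) and explicitly points out where that heuristic breaks down --- with probability close to one the initial configuration contains at least one large cluster that is not closed under the dynamics, so opinions can drift, activate edges whose endpoints were not initially within threshold, and merge clusters. The only rigorous result in this direction is the one-dimensional case (Theorem \ref{coexistence}), proved by an entirely different route: passing to the induced dynamics on edge weights, showing that on a path the number of opinions plus the number of empty edges equals $N$ (Lemma \ref{path}), and bounding the ultimate number of empty edges by comparison with a deterministic box game. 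None of that machinery is claimed to extend to general bounded-degree graphs, which is precisely why the statement is left open.

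Your proposal reproduces the paper's heuristic in more detail but does not close the gap, and you say so yourself. The reduction to controlling the set of ever-active vertices is sound, the $O(\ep K N)$ bound on primary edges with bounded-differences concentration is sound, and the drift observation $|v_{i+1}-v_i|<\ep$ is correct. The missing step is exactly the one you flag: the stochastic domination of the contamination cluster by a subcritical branching process with mean offspring $O(\ep)$. This difficulty is not merely technical. Once two vertices are within $\ep$ of each other they interact at rate $1$ with no further cost in $\ep$; the only randomness available is the initial configuration, and the Peierls bound controls only the clusters of \emph{initially} compatible vertices. A cluster of size $m$ occurs somewhere in the graph with high probability once $m$ is of order $\log N$; it can homogenize internally, and the resulting common opinion can land within $\ep$ of a neighbour that was initially incompatible with every vertex of the cluster, restarting the cascade without paying a fresh factor of $\ep$. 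Your exploration scheme would have to show that such secondary activations keep the branching subcritical, and no argument for this is given. As written the proposal is a plausible programme, not a proof; a correct submission would either carry out the domination in full or restrict to the path case and follow the edge-weight argument of Section \ref{sec:coexistence}.
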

\noindent Using an analogy with bond percolation, any configuration of opinions can be thought of as inducing a partition
 of the vertex set into clusters, where two vertices $x$ and $y$ belong to the same cluster if and only if there is
 a graph path in $G$ from $x$ to $y$ with $\ep$-agreement along it.
 Peierls argument implies that, when $\ep$ is sufficiently small, the size of a typical cluster at time 0 decays
 exponentially, therefore the initial number of clusters can be made larger than $cN$ with high probability.
 Then, under the assumption that the opinion distance between any two vertices belonging to adjacent clusters
 initially exceeds $\ep$, all the clusters are closed under the dynamics: an opinion can at most spreads within its
 parent cluster.
 In particular, the ultimate number of coexisting opinions dominates the initial number of clusters, which
 suggests the conjecture.
 This argument breaks down if the initial configuration contains at least one cluster that is not closed under the
 dynamics, which occurs with probability close to 1 due to the presence of arbitrarily large clusters at time 0 that
 contain a wide range of opinions, however the analogy with bond percolation supported by numerical simulations gives
 us the following insight into the geometry of the ultimate configuration when the confidence threshold is small:
 most opinions that persist have only few representatives, those that originate from clusters that are initially closed
 under the dynamics, and few opinions have a significantly larger number of representatives, those that originate from
 clusters that are not closed under the dynamics.
 Based on a fairly different approach far from percolation theory, the conjecture can be established for path graphs,
 i.e., two of the vertices have degree 1 and all others degree 2.
\begin{theorem}
\label{coexistence}
 Assume that $G = (V, E)$ is a path. Then
 $$ P_{\ep} \,(\nu_{\ep} (G) < (1 - c_1 \,\ep) N) \ \leq \ \exp (- c_2 \,\ep N) $$
 for some constants $c_1, c_2 > 0$ that do not depend on $N$ and $\ep$.
\end{theorem}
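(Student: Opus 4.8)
The plan is to show that, with probability at least $1 - \exp (- c_2 \ep N)$, at most $c_1 \ep N$ of the $N$ initial opinions ever disappear from the system; since these opinions are almost surely pairwise distinct, this gives $\nu_{\ep} (G) \geq (1 - c_1 \ep) N$.

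Label the vertices $1, 2, \ldots, N$ along the path, call an edge $(i, i + 1)$ \emph{open} if $|\eta_0 (i) - \eta_0 (i + 1)| < \ep$ and \emph{closed} otherwise, and write $\mc O$ for the set of open edges. The maximal runs of vertices joined by open edges are the \emph{clusters} of the configuration, and there are exactly $N - \card (\mc O)$ of them. First, splitting $\mc O$ into the edges with even left endpoint and those with odd left endpoint produces two independent families of Bernoulli variables with parameter at most $2 \ep$, so a Chernoff bound gives $P_{\ep} (\card (\mc O) > 6 \ep N) \leq 2 \exp (- c \ep N)$ for some universal $c > 0$; on the complement there are at least $N - 6 \ep N$ clusters. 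Second, a cluster $C$ that never exchanges an opinion with a vertex outside itself keeps at least one opinion forever: restricted to $C$ the process is a voter-type dynamics on a subpath with values in the finite set $\{ \eta_0 (x) : x \in C \}$, so it is absorbed in a configuration retaining at least one opinion of $C$, and hence at most $\card (C) - 1$ opinions of $C$ disappear. Therefore, \emph{provided no two clusters ever merge}, the number of opinions lost is at most $\sum_C (\card (C) - 1) = N - (N - \card (\mc O)) = \card (\mc O)$, so that $\nu_{\ep} (G) \geq N - \card (\mc O)$, which already yields the theorem when combined with the first estimate.

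The genuine difficulty is that clusters \emph{can} merge: a closed edge $(b, b + 1)$ may become active at a later time if, through internal voter moves, the opinions inside one of its two adjacent clusters drift to within $\ep$ of $\eta_0 (b)$ or $\eta_0 (b + 1)$, after which the clusters coalesce and further closed edges may open in a chain reaction. The plan is to prove these chains are short. Assign to each open edge $e$ the number $\Lambda_e$ of closed edges that ever open because of interactions triggered by $e$, so that the total number of closed edges that ever open is at most $\sum_{e \in \mc O} \Lambda_e$. Revealing the initial opinions one vertex at a time, outward from $e$, one checks that extending such a chain by one more closed edge forces a not-yet-revealed opinion to land within $\ep$ of a value uniform over a window of length $2 \ep$, an event of conditional probability at most $1/2$; thus $\Lambda_e$ is stochastically dominated by a geometric variable whose parameter depends on neither $\ep$ nor $N$. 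Consequently $\sum_{e \in \mc O} \Lambda_e$ is dominated by a sum of about $2 \ep N$ such variables and concentrates, $P_{\ep} ( \sum_e \Lambda_e > C \ep N ) \leq \exp (- c' \ep N)$; and since each merging event involves two clusters whose sizes again have uniformly exponential tails, the number of vertices lying in clusters that ever merge --- an upper bound for the opinions lost through mergers --- is $O (\ep N)$ off an event of probability $\exp (- c'' \ep N)$. Assembling these estimates gives $\nu_{\ep} (G) \geq N - \card (\mc O) - O (\ep N) \geq (1 - c_1 \ep) N$ outside an event of probability at most $\exp (- c_2 \ep N)$.

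I expect the stochastic domination of $\Lambda_e$ to be the main obstacle: the chain reaction is not a one-directional propagation, since domain walls of disagreement may move back and forth and clusters may grow in size, so making precise the ``probability at most $1/2$ per additional step'' estimate requires choosing carefully the order in which the randomness is revealed and the filtration against which the domination is claimed.
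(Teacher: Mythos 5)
Your overall architecture --- lower-bound $\nu_{\ep}(G)$ by the number of dynamically closed components, bound the number of initially open edges by Chernoff, then control the extra losses coming from cluster mergers --- is the percolation heuristic, not a proof, and the gap sits exactly where you say it does: the stochastic domination of $\Lambda_e$. As written, that step fails for two concrete reasons. First, $\Lambda_e$ (``closed edges that ever open because of interactions triggered by $e$'') is not a well-defined random variable: whether a closed edge $(b,b+1)$ ever becomes active depends on the whole trajectory, and attributing the event to a particular open edge requires a construction you have not given. The natural fix is to replace the dynamical event by the static necessary condition (the initial opinion \emph{sets} of the two neighboring, possibly already-merged, intervals are within $\ep$ of each other) and iterate it to a ``super-cluster'' partition that is provably closed under the dynamics. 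Second, once you do that, the per-step extension estimate is not what you claim: after $k$ clusters have coalesced, the merged interval's opinion set spans up to order $k\ep$, so the window the next revealed opinion must hit has length growing with $k$, not $2\ep$; and conditioning on the new edge being closed removes a window of length $2\ep$ from a set of measure $1-O(\ep)$, so the resulting conditional probability is $O(k\ep)$, not ``at most $1/2$'' --- a bound that is much better than yours for short chains but becomes vacuous for long ones, which is precisely the regime a geometric domination with a uniform parameter is supposed to control. The same size-biasing problem undermines the claim that the clusters involved in mergers ``again have uniformly exponential tails.'' None of this is obviously fatal --- I believe the super-cluster count can be controlled with enough care --- but the proposal asserts the key estimate rather than proving it, and the one-line justification offered for it is incorrect.

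It is worth noting that the paper explicitly raises and then abandons this route: the discussion around Conjecture \ref{conjecture} presents exactly your cluster picture and observes that it ``breaks down'' because, with probability close to one, some clusters are not closed under the dynamics. The actual proof of Theorem \ref{coexistence} avoids tracking mergers altogether. It passes to the induced process $\xi_t(e)=\eta_t(e_2)-\eta_t(e_1)$ of signed disagreements on edges, proves the identity $\card\{\eta_t(x)\} + \card\{e:\xi_t(e)=0\} = N$ on a path (Lemmas \ref{tree} and \ref{path}), bounds the initial number of edges of each ``type'' $j$ (absolute weight in $((j-1)\ep, j\ep)$) by a Binomial$(N-1,2\ep)$ tail (Lemma \ref{binomial}), and then couples the edge dynamics with a deterministic urn game in which each update moves one ball from box $1$ to box $0$ and shifts one other ball by at most one box; a worst-case analysis of that game shows that, starting from at most $M=\integer{4\ep N}-1$ balls per box, box $0$ can never receive more than $3M$ balls. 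That deterministic cap on the number of empty edges is what replaces your chain-reaction estimate, and it sidesteps entirely the question of which clusters merge. If you want to complete your own route, the super-cluster construction plus a careful left-to-right revelation argument is the piece you still owe.
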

\noindent The conjecture when the graph is a path follows by taking $\ep = (1 - c) / c_1$.
 In order to prove this result, we will look at the process coupled with the opinion dynamics that keeps track of the
 disagreements along the edges rather than the actual opinion at each vertex.
 It will be proved that the number of opinions on a path graph, and more generally the number of connected components
 in which all vertices share the same opinion on a tree, is nearly equal to the number of edges along which a disagreement
 persists, therefore the objective will be to bound the ultimate number of edges whose endpoints agree.
 When the graph is a path, as opposed to a tree with larger degree or a graph including loops, the dynamics on
 the edges can be expressed in a simple manner, which is the key to finding a suitable upper bound.
 This will be done by ultimately ignoring the spatial structure and exhibiting a connection with a simple urn problem.


\section{Proof of Theorems \ref{coloring} and \ref{clique}}
\label{sec:static}

\indent To study the opinion dynamics and characterize their equilibriums, it is convenient to construct the process
 graphically using an idea of \cite{harris_1972}.
 We first orient each edge $e \in E$ arbitrarily.
 Each edge is further equipped with a Poisson process with parameter 2 whose $n$th arrival time is denoted by $T_n (e)$
 together with a collection of independent coin flips by letting
 $$ P \,(U_n (e) = + 1) \ = \ P \,(U_n (e) = - 1) \ = \ 1/2 \quad \hbox{for all} \ n \geq 1. $$
 Poisson processes and collections of coin flips attached to different edges are independent.
 The process starting from any initial configuration is constructed from this graphical representation in the following way.
 Let $e = (x, y)$ being oriented from $x$ to $y$, and let $a$ and $b$ denote the opinion at $x$ and $y$ at time $T_n (e)$.
\begin{enumerate}
 \item If $|a - b| > \ep$ then nothing happens at time $T_n (e)$. \vspace{4pt}
 \item If $|a - b| \leq \ep$ and $U_n (e) = + 1$ then the opinion of $y$ jumps from $b$ to $a$. \vspace{4pt}
 \item If $|a - b| \leq \ep$ and $U_n (e) = - 1$ then the opinion of $x$ jumps from $a$ to $b$.
\end{enumerate}
 Any absorbing state of the process induces a stationary distribution.
 Since the vertex set is finite, it should be clear that any stationary distribution is in turn supported
 on the set of absorbing states, which we now prove rigorously using the martingale convergence theorem.

\begin{lemma}
\label{equilibrium}
 Assume that $G = (V, E)$ is finite.
 Then, any stationary distribution is supported on the set of absorbing states.
\end{lemma}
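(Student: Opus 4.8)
The plan is to build a bounded martingale directly from the graphical representation and then combine the martingale convergence theorem with stationarity to show the process cannot move. Set $M_t = \sum_{v \in V} \eta_t (v)$. Using the graphical construction described above, $M_t$ changes only at the arrival times $T_n (e)$, and at such a time on an edge $e = (x, y)$ oriented from $x$ to $y$, with current opinions $a$ at $x$ and $b$ at $y$: nothing happens if $|a - b| > \ep$, while if $|a - b| \leq \ep$ the total changes by $a - b$ when $U_n (e) = +1$ and by $b - a$ when $U_n (e) = -1$. Since $U_n (e)$ is a fair coin independent of the configuration just before $T_n (e)$, the conditional expected increment vanishes; as only finitely many arrivals occur on any bounded time interval and $M_t \in [0, N]$, the process $(M_t)_{t \geq 0}$ is a bounded martingale. (The same computation shows $\sum_v \phi (\eta_t (v))$ is a bounded martingale for every bounded $\phi$, but $\phi = \mathrm{id}$ suffices here.)

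Next I would use stationarity. Let $\pi$ be a stationary distribution and run the process with $\eta_0 \sim \pi$, so that $\eta_t \sim \pi$ and hence $M_t \overset{d}{=} M_0$ for every $t \geq 0$. Because $M$ is a bounded martingale, $E [M_t^2] = E [M_0^2]$ and $E [M_0 M_t] = E [M_0 \, E [M_t \mid \mathcal{F}_0]] = E [M_0^2]$, whence $E [(M_t - M_0)^2] = 0$ and therefore $M_t = M_0$ almost surely for each fixed $t$. Taking a countable union over rational times and invoking the path regularity of the jump process (the trajectories are piecewise constant, hence right-continuous), one gets that almost surely $M_t = M_0$ for all $t \geq 0$; that is, the total opinion never changes.

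The conclusion then follows from a short first-event argument. Call an edge $(x, y)$ \emph{active} in a configuration $\eta$ if $\eta (x) \neq \eta (y)$ and $|\eta (x) - \eta (y)| \leq \ep$; a configuration is absorbing precisely when it has no active edge. Suppose for contradiction that $\pi$ assigns positive mass to the set of non-absorbing configurations. Conditionally on $\eta_0$ being such a configuration with active edge $e_0 = (x, y)$, there is a strictly positive probability (equal to $1 / \card (E)$, by symmetry of the independent rate-$2$ Poisson clocks) that the first arrival in the whole graph occurs on $e_0$; at that instant $\eta$ still equals $\eta_0$, the edge $e_0$ is still active, and whatever the value of the attached coin, $M$ jumps by $\pm (\eta_0 (x) - \eta_0 (y)) \neq 0$. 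This contradicts the fact just established that $M_t = M_0$ for all $t$ almost surely, so $\pi$ must be supported on the set of absorbing states.

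I expect the only delicate point to be the upgrade from ``$M_t = M_0$ almost surely for each fixed $t$'' to ``$M_t = M_0$ for all $t$ simultaneously, almost surely'', which is handled by the countable-dense-set-of-times argument together with the piecewise-constancy of the paths; the martingale identity and the first-event estimate are routine computations with the graphical representation.
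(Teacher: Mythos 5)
Your proof is correct, but it takes a genuinely different route from the paper's. Both arguments exploit the fairness of the coins $U_n(e)$ to produce a bounded martingale, but you work with the total opinion $M_t = \sum_{v \in V} \eta_t(v)$ and combine the classical $L^2$ identity $E[(M_t - M_0)^2] = E[M_t^2] - 2E[M_0 M_t] + E[M_0^2] = 0$ (valid for a bounded martingale whose marginals are stationary) with a first-arrival argument to rule out active edges; you also correctly flag and handle the one delicate point, namely upgrading ``$M_t = M_0$ a.s.\ for each fixed $t$'' to an almost sure statement over all $t$ via rational times and right-continuity of the piecewise-constant paths. The paper instead fixes a vertex $x$ and tracks the integer-valued bounded martingale $\theta_t(x) = \card\,\{y \in V : \eta_t(y) = \eta_0(x)\}$; the martingale convergence theorem plus the finiteness of its range forces each $\theta_t(x)$, hence each set $\Theta_t(x)$, to be eventually constant, so that from \emph{any} initial configuration the process is trapped in an absorbing state after an almost surely finite time, and the claim about stationary distributions follows. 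The trade-off is that your argument is shorter and entirely sufficient for the literal statement of the lemma, whereas the paper's argument establishes the strictly stronger fixation property (almost sure convergence to an absorbing state from an arbitrary start), which is precisely what is invoked later in the proof of Theorem \ref{consensus}; if one adopted your proof, that convergence statement would still have to be supplied separately.
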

\begin{proof}
 For $x \in V$, let $\Theta_t (x)$ be the set of vertices with opinion $\eta_0 (x)$ at time $t$, that is the set of vertices
 whose opinion at time $t$ originates from $x$ at time 0, i.e.,
 $$ \Theta_t (x) = \{y \in V : \eta_t (y) = \eta_0 (x) \}. $$
 The set-valued process $\Theta_t (x)$ may only evolve at times $T_n (e)$ when the oriented edge $e$ connects a vertex
 $y \in \Theta_t (x)$ to a vertex $z \in V \setminus \Theta_t (x)$.
 Without loss of generality, set $e = (y, z)$ is oriented from vertex $y$ to vertex $z$.
\begin{enumerate}
 \item If $|\eta_t (y) - \eta_t (z)| > \ep$ then nothing happens at time $T_n (e)$. \vspace{4pt}
 \item If $|\eta_t (y) - \eta_t (z)| \leq \ep$ then $\theta_t (x) = \card (\Theta_t (x))$ jumps to $\theta_t (x) + U_n (e)$ at the
  arrival time $T_n (e)$.
\end{enumerate}
 Since the random variable $U_n (e)$ takes values $+ 1$ and $- 1$ with equal probabilities, this implies that the number of vertices
 with opinion $\eta_0 (x)$ is a martingale.
 In particular, the martingale convergence theorem implies the existence of a random variable $\theta_{\infty} (x) \in \{0, 1, \ldots, N \}$ such that
 $$ \lim_{t \to \infty} \ \theta_t (x) = \theta_{\infty} (x) \quad \hbox{with probability 1}. $$
 Since the state space of $\theta_t (x)$ is finite, there exist a stopping time $T_x$ almost surely finite and a random subset
 of $V$ that we denote by $\Theta_{\infty} (x)$ such that
 $$ \theta_t (x) = \theta_{\infty} (x) \quad \hbox{and} \quad \Theta_t (x) = \Theta_{\infty} (x) \quad \hbox{for all} \ t \geq T_x. $$
 Finally, set $T = \max \,\{T_x : x \in V \}$.
 Then, $T$ is almost surely finite and
 $$ \Theta_t (x) = \Theta_{\infty} (x) \quad \hbox{for all $x \in V$ and $t \geq T$} $$
 indicating that, starting from any initial configuration, the process eventually gets trapped into an absorbing state.
 This completes the proof.
\end{proof}

\begin{lemma}
\label{subgraph}
 Assume that $G_1 = (V, E_1)$ and $G_2 = (V, E_2)$ with $E_1 \subset E_2$.
 Then, we have $\mu_{\ep} (G_1) \geq \mu_{\ep} (G_2)$.
\end{lemma}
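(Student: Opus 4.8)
The plan is to reduce the inequality to the purely combinatorial description of equilibria. By Lemma~\ref{equilibrium}, on any finite graph the stationary distributions are exactly those supported on the absorbing states, and an absorbing state is precisely a \emph{static} configuration, i.e. a map $\eta : V \longrightarrow [0,1]$ such that for every edge $(x,y)$ of the graph one has $|\eta(x) - \eta(y)| > \ep$ or $\eta(x) = \eta(y)$. Since any static configuration is trivially an equilibrium for the process started from it, the index $\mu_{\ep}(G)$ is simply the largest number of distinct values attained by a static configuration on $G$, and this maximum is achieved by some $\eta$.

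The key observation is that the defining condition of a static configuration is a conjunction indexed by the edge set, hence it can only become weaker when edges are deleted. Concretely, I would take a static configuration $\eta$ for $G_2 = (V, E_2)$ realizing $\mu_{\ep}(G_2)$ distinct opinions. For each edge $(x,y) \in E_1$ we have $(x,y) \in E_2$ because $E_1 \subset E_2$, so $|\eta(x) - \eta(y)| > \ep$ or $\eta(x) = \eta(y)$; thus $\eta$ is also a static configuration for $G_1 = (V, E_1)$. Consequently $\mu_{\ep}(G_1)$, being the supremum of the number of opinions over all static configurations on $G_1$, is at least the number of opinions of $\eta$, namely $\mu_{\ep}(G_2)$, which yields $\mu_{\ep}(G_1) \geq \mu_{\ep}(G_2)$.

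There is essentially no obstacle here beyond correctly invoking the equilibrium/static-configuration dictionary supplied by Lemma~\ref{equilibrium}. The only point requiring a word of care is that $G_1$ need not be connected even when $G_2$ is; but the static-configuration characterization of equilibria uses only finiteness of the vertex set, not connectedness, so the argument goes through verbatim.
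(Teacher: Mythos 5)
Your argument is correct and coincides with the paper's own proof: both take an absorbing state on $G_2$ realizing $\mu_{\ep}(G_2)$ opinions, observe that the static-configuration condition restricted to $E_1 \subset E_2$ still holds, and conclude that the same configuration is absorbing for $G_1$. The remark about connectedness being unnecessary is a fine extra precaution but changes nothing.
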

\begin{proof}
 By Lemma \ref{equilibrium}, there exists a configuration $\eta$ with $\mu_{\ep} (G_2)$ distinct opinions which is an absorbing
 state of the process evolving on $G_2$. In particular, we have
 $$ |\eta (x) - \eta (y)| > \ep \quad \hbox{or} \quad \eta (x) = \eta (y) \qquad \hbox{for each edge} \ (x, y) \in E_2. $$
 This property is true for the edges of $G_1$ since $E_1 \subset E_2$, which implies that $\eta$ is also an absorbing state for the
 process evolving on $G_1$.
 In conclusion, we have the inequality $\mu_{\ep} (G_1) \geq \mu_{\ep} (G_2)$.
\end{proof}

\begin{lemma}
\label{partition}
 Assume that $G = (V, E)$ is a finite connected graph.
 Let $\{V_1, V_2 \}$ be a partition of $V$, and let $G_i = (V_i, E_i)$ be the subgraph induced by $V_i$. Then,
 $$ \mu_{\ep} (G) \ \leq \ \mu_{\ep} (G_1) + \mu_{\ep} (G_2). $$
\end{lemma}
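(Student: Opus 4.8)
The plan is to start from an extremal absorbing configuration on $G$ and observe that its restrictions to $V_1$ and to $V_2$ are absorbing configurations for the processes on $G_1$ and $G_2$. First, by Lemma~\ref{equilibrium} there is a configuration $\eta : V \longrightarrow [0, 1]$ taking exactly $\mu_{\ep} (G)$ distinct values which is an absorbing state for the process on $G$, so that
$$ |\eta (x) - \eta (y)| > \ep \quad \hbox{or} \quad \eta (x) = \eta (y) \qquad \hbox{for each edge} \ (x, y) \in E. $$
Since $E_i \subset E$ for $i = 1, 2$, the same dichotomy holds across every edge of $G_i$, hence the restriction $\eta_i := \eta|_{V_i}$ is a static configuration of the process on $G_i$. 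By definition of the opinion index (which, like the characterization of equilibria in Lemma~\ref{equilibrium}, only refers to the static condition on edges and therefore does not require the subgraph to be connected), the number of distinct opinions displayed by $\eta_i$ is at most $\mu_{\ep} (G_i)$.

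Next, because $V = V_1 \cup V_2$, every opinion that occurs in $\eta$ occurs in $\eta_1$ or in $\eta_2$, so the number of distinct values of $\eta$ is bounded by the sum of the numbers of distinct values of $\eta_1$ and of $\eta_2$. Combining this with the bound from the previous paragraph gives
$$ \mu_{\ep} (G) \ = \ \card (\eta (V)) \ \leq \ \card (\eta_1 (V_1)) + \card (\eta_2 (V_2)) \ \leq \ \mu_{\ep} (G_1) + \mu_{\ep} (G_2), $$
which is the desired inequality.

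There is no real obstacle here; the single point deserving a word of care is that $G$ being connected does not make $G_1$ and $G_2$ connected, so one must be sure that $\mu_{\ep}$ and Lemma~\ref{equilibrium} are invoked only through the edgewise static condition, which is insensitive to connectivity — this is exactly the reading under which $\mu_{\ep} (\pi (G))$ already makes sense in Theorem~\ref{clique}. Note also that one does not expect equality in general: the two subgraphs lose the edges joining $V_1$ to $V_2$, and these removed constraints are precisely what can allow an absorbing state of $G$ to have fewer opinions than the sum of the indices of its two pieces.
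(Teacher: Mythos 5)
Your proof is correct and follows essentially the same route as the paper: take an extremal absorbing state on $G$, restrict it to $V_1$ and $V_2$ using $E_i \subset E$ to see that each restriction is static on $G_i$, and bound the number of distinct values of $\eta$ by the sum over the two pieces. Your added remark about connectivity of the induced subgraphs is a sound clarification of a point the paper leaves implicit.
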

\begin{proof}
 Since $E_i \subset E$, any absorbing state $\eta : V \longrightarrow [0, 1]$ for the process evolving on the graph $G$ induces
 an absorbing state $\eta_i$ for the process on $G_i$.
 In particular, there exist two absorbing states for the processes evolving on $G_1$ and $G_2$, respectively,
 such that the sum of the number of distinct opinions in each configuration is larger than $\mu_{\ep} (G)$.
 The result follows.
\end{proof}

\begin{lemma}
\label{complete}
 If $G = (V, E)$ is a complete graph then $\mu_{\ep} (G) = \min (N, \integer{\ep^{-1}})$.
\end{lemma}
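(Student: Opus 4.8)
The plan is to establish the two matching bounds $\mu_\ep(G) \le \min(N, \integer{\ep^{-1}})$ and $\mu_\ep(G) \ge \min(N, \integer{\ep^{-1}})$ by elementary arguments that exploit the defining feature of a complete graph: every pair of vertices is joined by an edge, so the static-configuration condition $|\eta(x) - \eta(y)| > \ep$ or $\eta(x) = \eta(y)$ must hold for \emph{every} pair $x, y \in V$. I will use throughout that, by Lemma \ref{equilibrium}, $\mu_\ep(G)$ equals the largest number of distinct values realized by such a static configuration.

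For the upper bound, I would take an arbitrary static configuration $\eta$ with $k$ distinct opinion values and list them as $v_1 < v_2 < \cdots < v_k$ in $[0,1]$. For each $i$ the vertices carrying $v_i$ and $v_{i+1}$ are adjacent and disagree, so $v_{i+1} - v_i > \ep$; summing these $k-1$ inequalities gives $(k-1)\,\ep < v_k - v_1 \le 1$. Hence $k-1$ is a nonnegative integer strictly less than $\ep^{-1}$, which forces $k - 1 \le \integer{\ep^{-1}} - 1$, i.e. $k \le \integer{\ep^{-1}}$, and of course $k \le N$ as well. Taking the supremum over all static configurations yields $\mu_\ep(G) \le \min(N, \integer{\ep^{-1}})$.

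For the lower bound, set $k = \min(N, \integer{\ep^{-1}})$ and build a single static configuration with exactly $k$ distinct opinions. If $k = 1$ any constant configuration works, so assume $k \ge 2$. From $k \le \integer{\ep^{-1}}$ one gets $k - 1 \le \integer{\ep^{-1}} - 1 < \ep^{-1}$, hence $\ep < 1/(k-1)$, so the $k$ equally spaced values $v_i = (i-1)/(k-1)$, $i = 1, \ldots, k$, all lie in $[0,1]$ and are pairwise at distance at least $1/(k-1) > \ep$. Assigning these values to the $N \ge k$ vertices so that each one is used at least once produces a configuration in which any two vertices either share an opinion or differ by more than $\ep$; this is a static configuration with $k$ opinions, so $\mu_\ep(G) \ge k = \min(N, \integer{\ep^{-1}})$, and combining with the upper bound gives the claimed equality.

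The argument is routine; the only place calling for a little care is the passage between the strict inequality $(k-1)\ep < 1$ and the ceiling bound $k \le \integer{\ep^{-1}}$, together with the mirror-image verification in the lower bound that choosing $k = \integer{\ep^{-1}}$ still leaves room ($\ep < 1/(k-1)$) to pack $k$ pairwise $\ep$-separated opinions into the unit interval. I do not expect any substantive obstacle beyond this bookkeeping with the ceiling function.
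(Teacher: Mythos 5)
Your proposal is correct and follows essentially the same route as the paper: both the upper bound (consecutive distinct opinions in a static configuration on a complete graph must be more than $\ep$ apart, and these gaps must fit inside $[0,1]$) and the lower bound (an explicit configuration with $\min(N, \integer{\ep^{-1}})$ equally spaced opinions at mutual distance exceeding $\ep$) match the paper's argument, with only cosmetic differences in how the ceiling-function bookkeeping is phrased. No gaps.
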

\begin{proof}
 Let $J = \integer{\ep^{-1}}$ and let $\eta : V \longrightarrow [0, 1]$ be a configuration with at least $J + 1$ opinions.
 Then, there exist vertices $x_0, x_1, \ldots, x_J$ such that
 $$ 0 \ < \ \eta (x_0) \ < \ \eta (x_1) \ < \ \cdots \ < \ \eta (x_J) \ < \ 1. $$
 In particular, we can find $j \in \{0, 1, \ldots, J - 1 \}$ such that
 $$ |\eta (x_{j + 1}) - \eta (x_j)| \ \leq \ J^{-1} \ = \ 1 / \integer{\ep^{-1}} \ \leq \ \ep. $$
 Since vertices $x_j$ and $x_{j + 1}$ are neighbors (complete graph), the previous inequality implies that the
 configuration $\eta$ is not an absorbing state.
 It then follows from Lemma \ref{equilibrium} that $J + 1$ opinions cannot coexist at equilibrium so
 $$ \mu_{\ep} (G) \ \leq \ J \ = \ \integer{\ep^{-1}}. $$
 To conclude, it suffices to construct an absorbing state with $\min (N, J)$ distinct opinions.
 Assume first that $\min (N, J) = J$ and set
 $$ \eta (x_j) = \min (j / (J - 1), 1) \quad \hbox{for} \ j = 0, 1, \ldots, N - 1 $$
 where $x_0, x_1, \ldots, x_{N - 1}$ denote the vertices.
 Since $\ep < (J - 1)^{-1}$ the configuration thus defined is an absorbing state with $J$ distinct opinions.
 Finally, when $\min (N, J) = N$ we have $\ep < (N - 1)^{-1}$ and the same argument implies the existence of an
 absorbing state with $N$ opinions.
\end{proof} \\ \\
 With Lemmas \ref{equilibrium}-\ref{complete} in hands, we can now prove Theorems \ref{coloring} and \ref{clique}. \\


\begin{figure}[t]
\begin{center}
 \epsfig{figure = 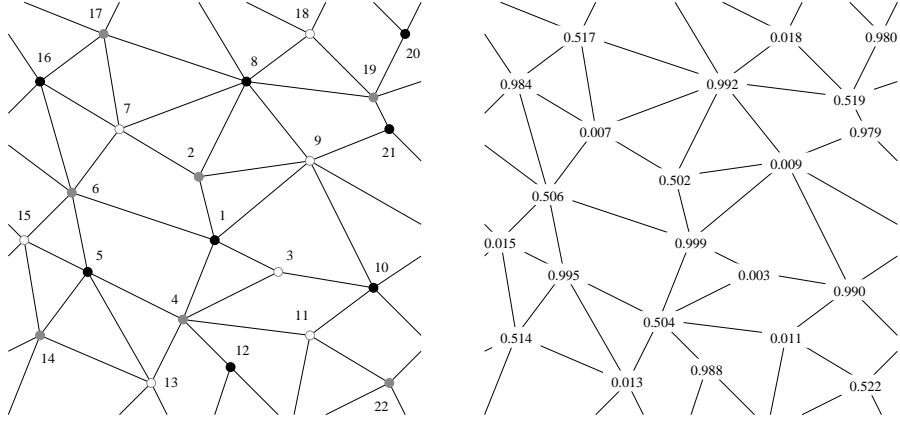, width = 340pt}
\end{center}
 \caption{\upshape{$\chi = 3$ and $\alpha = 0.001$}}
\label{fig:color}
\end{figure}

\begin{demon}{coloring}
 Assume first that $\ep < (\chi - 1)^{-1}$ and fix a coloring of the vertices using exactly $\chi$ colors, say $1, 2, \ldots, \chi$,
 such that no two adjacent vertices receive the same color.
 Also introduce a total order on the vertex set by setting
 $$ V \ = \ \{x_1, x_2, \ldots, x_N \} $$
 and fix $\alpha > 0$ sufficiently small such that $\ep + 2 N \alpha < (\chi - 1)^{-1}$.
 If vertex $x_i$ has received color $j \neq \chi$ then we set
 $$ \eta (x_i) \ = \ (j - 1) \cdot (\chi - 1)^{-1} + i \alpha $$
 whereas if vertex $x_i$ has received color $j = \chi$ then we set
 $$ \eta (x_i) \ = \ (j - 1) \cdot (\chi - 1)^{-1} - i \alpha \ = \ 1 - i \alpha. $$
 See Figure \ref{fig:color} for an example of construction on a 3-colorable graph.
 This induces a configuration with $N$ distinct opinions.
 Moreover, if vertices $x_i$ and $x_j$ have received different colors then
 $$ |\eta (x_i) - \eta (x_j)| \ > \ (\chi - 1)^{-1} - 2 N \alpha \ > \ \ep. $$
 Since no two adjacent vertices receive the same color, the configuration $\eta$ is an absorbing state, which establishes the first
 part of Theorem \ref{coloring}.
 We now assume that $\ep \geq (\chi - 1)^{-1}$, and consider the same coloring as before.
 Let $N_j$ denote the number of vertices with color $j$. Since
 $$ N_1 + N_2 + \cdots + N_{\chi} \ = \ N \quad \hbox{and} \quad N_j \in \Z_+ \quad \hbox{for} \ j = 1, 2, \ldots, \chi $$
 there is $j_0$ such that $N_{j_0} \geq \integer{N / \chi}$.
 Let $\alpha > 0$ such that $\ep + N \alpha < 1$, and define
 $$ \eta (x_i) \ = \ \left\{\begin{array}{cl} i \alpha & \hbox{for all $x_i \in V$ with color $j_0$} \vspace{2pt} \\
                                              1 & \hbox{for all $x_i \in V$ with color $j \neq j_0$} \end{array} \right. $$
 Since no two adjacent vertices have color $j_0$ and $1 - N \alpha > \ep$, this defines an absorbing state with
 $N_{j_0} + 1 \geq \integer{N / \chi} + 1$ opinions.
 To prove that $\mu_{\ep} (G) \geq \integer{\ep^{-1}}$, we consider the complete graph $\bar G$ with the same vertex set $V$ as $G$,
 and apply Lemmas \ref{subgraph} and \ref{complete} to get
 $$ \mu_{\ep} (G) \ \geq \ \mu_{\ep} (\bar G) \ = \ \min (N, \integer{\ep^{-1}}) \ = \ \integer{\ep^{-1}}. $$
 The last equality follows from the fact that $\integer{\ep^{-1}} \leq \chi - 1 < N$.
\end{demon} \\


\begin{demon}{clique}
 Let $V_1 = W$ and $V_2 = V \setminus W$, where $W \subset V$ is a maximal clique in the graph $G$.
 Then, the subgraph $G_1$ induced by $V_1$ is the complete graph with $\omega (G)$ vertices, and the subgraph induced by $V_2$
 is $G_2 = \pi (G)$ therefore, combining Lemmas \ref{partition} and \ref{complete}, we obtain
 $$ \mu_{\ep} (G) \ \leq \ \mu_{\ep} (G_1) \ + \ \mu_{\ep} (G_2) \ \leq \ \min (\omega (G), \integer{\ep^{-1}}) \ + \ \mu_{\ep} (\pi (G)). $$
 Since this holds for any choice of $W$, the result follows by induction.
\end{demon}


\section{Proof of Theorem \ref{consensus}}
\label{sec:consensus}

\noindent Recall that $\ep > 1/2$. We introduce
 $$ \Theta_t = \{x \in V : \eta_t (x) \notin (1 - \ep, \ep) \} \quad \hbox{and} \quad \theta_t = \card (\Theta_t). $$
 Using the notations of the proof of Lemma \ref{equilibrium}, we have
 $$ \Theta_t \ = \ \bigcup_{x \in \Theta_0} \Theta_t (x) \quad \hbox{and} \quad \theta_t \ = \ \sum_{x \in \Theta_0} \ \theta_t (x). $$
 The proof of Lemma \ref{equilibrium} implies that $\theta_t (x)$ is a martingale so the process $\theta_t$ also is a martingale.
 Therefore, by the martingale convergence theorem, there exists a random variable $\theta_{\infty}$ such that
 $$ \lim_{t \to \infty} \ \theta_t = \theta_{\infty} \quad \hbox{with probability 1}. $$
 If $\theta_t \notin \{0, N \}$ then there exists $(x, y) \in E$ such that $x \in \Theta_t$ and $y \notin \Theta_t$.
 Since any opinion in the interval $(1 - \ep, \ep)$ is within distance $\ep$ of both endpoints 0 and 1, we can conclude that
 vertices $x$ and $y$ have the following three properties:
 $$ |\eta_t (x) - \eta_t (y)| \leq \ep \qquad \eta_t (x) \neq \eta_t (y) \qquad \hbox{and} \qquad x \sim y, $$
 which implies that $\eta_t$ is not an absorbing state.
 Since by Lemma \ref{equilibrium} the process converges to an absorbing state, we deduce that the random variable $\theta_{\infty}$
 takes values in $\{0, N \}$.
 Let $T$ be the first time the process hits an absorbing state.
 The optional sampling theorem implies that
 $$ \begin{array}{rcl}
    E_{\ep} \,(\theta_T) & = &
    E_{\ep} \,(\theta_0) \ = \ N \times P_{\ep} \,(x \in \Theta_0) \ = \ 2 (1 - \ep) N \vspace{4pt} \\ & = &
    E_{\ep} \,(\theta_{\infty}) \ = \ 0 \times P_{\ep} \,(\theta_{\infty} = 0) + N \times P_{\ep} \,(\theta_{\infty} = N), \end{array} $$
 from which it follows that $P_{\ep} \,(\theta_{\infty} = 0) = 1 - 2 (1 - \ep) = 2 \ep - 1$.
 To conclude, we observe that on the event that $\theta_{\infty} = 0$, all the opinions present in the system after time $T$ are
 within distance $2 \ep - 1 \leq \ep$ of each other so the process evolves according to a voter model.
 In particular,
 $$ P_{\ep} \,(\nu_{\ep} (G) = 1) \ = \ P_{\ep} \,(\hbox{consensus}) \ \geq \ P_{\ep} \,(\theta_{\infty} = 0) \ = \ 2 \ep - 1. $$
 This completes the proof of Theorem \ref{consensus}.


\section{Proof of Theorem \ref{coexistence}}
\label{sec:coexistence}

\indent The key to proving Theorem \ref{coexistence} is to investigate a Markov process coupled with the opinion dynamics
 that keeps track of the disagreements along the edges of the graph rather than the actual opinion at each vertex.
 The state at time $t$ can be viewed as a weighted graph where each edge is assigned a weight that measures the opinion
 distance between its endpoints.
 The process evolves until each edge has either weight zero or an absolute weight larger than the confidence threshold $\ep$,
 which corresponds to an absorbing state.
 The strategy is to first exhibit a relationship between the ultimate number of coexisting opinions and the ultimate
 number of edges with weight zero, and then prove that the probability that the fraction of edges with weight zero exceeds
 a certain threshold decreases exponentially with the number of vertices. \\


\indent{\bf Dynamics on a general finite connected graph.}
 To define dynamics on the edges of the graph coupled with the opinion dynamics, we first equip the vertex set with
 a total order relationship by setting $V = \{x_1, x_2, \ldots, x_N \}$, which implicitly induces an orientation of
 the edges.
 That is, we think of each edge of the graph as being oriented with
 $$ \hbox{$e = (x_i, x_j) \in E$ being oriented from $x_i$ to $x_j$ if and only if $i < j$}. $$
 By convention, if $e = (x, y)$ is oriented from $x$ to $y$, we write $e_1 = x$ and $e_2 = y$.
 Then, the opinion dynamics on the vertex set described by the process $\eta_t$ along with an arbitrary orientation of
 the graph naturally induce dynamics on the edges by letting
 $$ \xi_t (e) \ = \ \eta_t (e_2) - \eta_t (e_1) \quad \hbox{for all} \ e \in E. $$
 The stochastic process thus defined is a continuous-time Markov process whose state space includes all mappings of
 the edge set $E$ into the interval $[-1, 1]$.
 Moreover, since a vertex adopts the opinion of a given neighbor at rate 1 if and only if the opinion distance between
 both vertices is less than $\ep$, the dynamics on the edge set are described by the Markov generator $L_E$ defined by
 $$ L_E \,g (\xi) \ = \ \sum_{e \in E} \ \ind \{|\xi (e)| < \ep \} \ [g (\xi_{+e}) + g (\xi_{-e}) - 2 \,g (\xi)] $$
 where $\xi_{+e}$ and $\xi_{-e}$ are the configurations on the edges given by
 $$ \begin{array}{rcl}
    \xi_{+e} (e') & = & \left\{\hspace{-3pt} \begin{array}{cl}
                               \xi (e') + \xi (e) & \hbox{when} \ \ e'_1 = e_2 \vspace{3pt} \\
                               \xi (e') - \xi (e) & \hbox{when} \ \ e'_2 = e_2 \end{array} \right. \vspace{4pt} \\
    \xi_{-e} (e') & = & \left\{\hspace{-3pt} \begin{array}{cl}
                               \xi (e') - \xi (e) & \hbox{when} \ \ e'_1 = e_1 \vspace{3pt} \\
                               \xi (e') + \xi (e) & \hbox{when} \ \ e'_2 = e_1 \end{array} \right. \end{array} $$
 while $\xi_{+e}$ and $\xi_{-e}$ coincide with $\xi$ otherwise.
 Note that $\xi_{+e} (e) = \xi_{-e} (e) = 0$ due to an agreement between the endpoints of $e$.
 Thinking of $\xi (e)$ as a weight, either a positive or a negative weight, of edge $e$, the evolution rules can be
 described informally as follows.
 Independently and at rate two, the edge $e = (x, y)$ chooses one of its endpoints, say $x$, uniformly at random.
 If the absolute value of the weight of edge $e$ is less than $\ep$, then the weight of each of the edges incident to
 $x$ is updated by summing or subtracting, depending on the orientation of the graph, the weight of $e$, which
 results in particular in the weight of $e$ becoming zero.
 Otherwise, nothing happens.
 Note that if originally edge $e$ has weight zero then the previous rule has no effect on the configuration.
 In particular, a configuration such that each edge has either weight zero or an absolute weight larger than $\ep$ is
 an absorbing state, and vice versa.
 This motivates the following definition.
\begin{definition}
\label{type}
 For $j = 1, 2, \ldots, J = \integer{\ep^{-1}}$, an edge $e \in E$ is said to be
\begin{enumerate}
 \item empty at time $t$ whenever $\xi_t (e) = 0$, \vspace{4pt}
 \item of type $j$ at time $t$ if its absolute weight is such that $(j - 1) \,\ep < |\xi_t (e)| < j \ep$.
\end{enumerate}
\end{definition}
\noindent In the previous definition, weights equal to a multiple of $\ep$ different from 0 are ignored because, as the difference
 between two numbers taken from a set of independent continuous random variables, they appear with probability 0. \\


\indent{\bf Dynamics on a path-like graph.}
 When the graph is a path, the vertex set is equipped with a natural order relationship such that
 $$ x_i \sim x_j \quad \hbox{if and only if} \quad |i - j| = 1. $$
 For simplicity, we shall use this order relationship to study the process on a path as it implies that two
 edges interacting result in one empty edge and one edge whose weight is obtained by summing, rather than subtracting,
 the weights of the edges:
 $$ \xi_{+e} (e') \ = \ \xi_{-e} (e') \ = \ \xi (e) + \xi (e') \quad
    \hbox{for all} \ e, e' \in E \ \hbox{with} \ e \cap e' \neq \varnothing. $$
 To describe the dynamics, it is also convenient to add two vertices, say $x_0$ and $x_{N + 1}$, which are connected by a
 virtual edge to $x_1$ and $x_N$, respectively.
 Then, the process on a path can be simply described as follows: each weight on a non-virtual edge with absolute value
 less than $\ep$ is displaced at rate 2 to one of the adjacent edges chosen uniformly at random.
 Note that the weight of both virtual edges is unimportant since it has no effect on the weight of the other edges. \\


\indent{\bf Number of opinions on trees and path-like graphs.}
 As previously mentioned, the number of opinions and the number of empty edges are closely related.
 More precisely, we prove that for a path-like graph the sum of these two quantities is always equal to the number of vertices.
 We first prove one inequality which holds for general connected graphs with no cycles.
 The result is sharp in the sense that it fails for connected graphs with cycles.

\begin{lemma}
\label{tree}
 Assume that the connected graph $G = (V, E)$ is a tree. Then
 $$ \card \,\{\eta_t (x) : x \in V \} \ + \ \card \,\{e \in E : \xi_t (e) = 0 \} \ \leq \ N. $$
\end{lemma}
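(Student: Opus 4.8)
The plan is to compare, at a fixed time $t$, two structures attached to the tree: the partition of $V$ into classes of vertices sharing the same opinion under $\eta_t$, and the spanning subgraph of $G$ formed by the empty edges. Write $E_t^0 = \{e \in E : \xi_t (e) = 0\}$ and $m = \card (E_t^0)$, and set $G_t^0 = (V, E_t^0)$. The first step is the observation that $G_t^0$ is a forest: it is a subgraph of $G$ on the same vertex set, and $G$ is a tree, hence acyclic, so $G_t^0$ is acyclic as well. A forest on $N$ vertices with $m$ edges has exactly $N - m$ connected components, so $G_t^0$ has $N - m$ components.

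The second step is that each connected component of $G_t^0$ is monochromatic for $\eta_t$. Indeed, if $x$ and $y$ lie in the same component, there is a path $x = z_0, z_1, \ldots, z_\ell = y$ in $G$ all of whose edges are empty, and $\xi_t (e) = 0$ for an edge $e$ means precisely $\eta_t (e_1) = \eta_t (e_2)$ regardless of its orientation; iterating along the path gives $\eta_t (x) = \eta_t (y)$. Consequently every distinct value taken by $\eta_t$ occurs on a union of components of $G_t^0$, so $\card \{\eta_t (x) : x \in V\}$ is at most the number of components of $G_t^0$, namely $N - m$. Rearranging yields $\card \{\eta_t (x) : x \in V\} + m \leq N$, which is the claimed inequality.

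I do not expect a genuine obstacle here; the argument is a one-line counting estimate once the forest structure is isolated, and both steps are elementary. The only point that deserves emphasis — and the reason the hypothesis ``tree'' cannot be dropped — is the identity ``number of components $= N - m$'', which fails as soon as $G_t^0$ contains a cycle: a monochromatic triangle already has $m = N = 3$ while $\card \{\eta_t (x)\} = 1$, violating the bound, which is exactly the sharpness remark preceding the lemma. (For the sharper equality on a path one would further note that two adjacent components of $G_t^0$ are joined by a non-empty edge, forcing their opinions to differ; this refinement is not needed for Lemma \ref{tree} itself.)
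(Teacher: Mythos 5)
Your argument is correct and is essentially identical to the paper's own proof: both view the empty edges together with all $N$ vertices as a forest (acyclic because $G$ is a tree), observe that each component of this forest is monochromatic for $\eta_t$, and conclude by the count ``number of components $= N - m$''. No gaps; the sharpness remark about cycles also matches the discussion following the lemma in the paper.
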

\begin{proof}
 As a subgraph of a tree, the set of empty edges at time $t$ along with all $N$ vertices of the graph is a forest.
 Since two vertices belonging to the same tree in this forest are connected by a path of empty edges, they share the same opinion.
 In particular, the total number of opinions is at most equal to the number of trees in the forest, say $T$.
 Using in addition that for any tree the number of vertices minus the number of edges equals one, we also have that $N$ is equal
 to the number of empty edges plus the number of trees $T$.
 In summary, we have
 $$ \card \,\{\eta_t (x) : x \in V \} \ \leq \ T \quad \hbox{and} \quad \card \,\{e \in E : \xi_t (e) = 0 \} \ = \ N - T $$
 from which the lemma follows.
\end{proof} \\ \\
 Having a graph with a self-avoiding loop of length $m$ and assuming that all the vertices in this loop share the same opinion
 but that all the other vertices have different opinions gives
 $$ \card \,\{\eta_t (x) : x \in V \} \ = \ N - m + 1 \quad \hbox{and} \quad \card \,\{e \in E : \xi_t (e) = 0 \} \ = \ m, $$
 so the assumption that the connected graph is a tree in the previous lemma is necessary.
 The next result states that the inequality in Lemma \ref{tree} becomes an equality on a path-like graph, which is again sharp in
 the sense that this does not hold if at least one vertex has degree 3.
\begin{lemma}
\label{path}
 If $G = (V, E)$ is a path and initial opinions are all distinct then
 $$ \card \,\{\eta_t (x) : x \in V \} \ + \ \card \,\{e \in E : \xi_t (e) = 0 \} \ = \ N. $$
\end{lemma}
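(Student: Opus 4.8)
The plan is to derive the identity from Lemma~\ref{tree} together with a separate proof of the reverse inequality. What the proof of Lemma~\ref{tree} actually establishes is that $\card\,\{\eta_t (x) : x \in V\} \le T$ and $\card\,\{e \in E : \xi_t (e) = 0\} = N - T$, where $T$ is the number of connected components of the subgraph formed by the empty edges. Hence it suffices to show that, on a path started from pairwise distinct opinions, $\card\,\{\eta_t (x) : x \in V\} = T$ for every $t$, i.e. that distinct components of the empty-edge subgraph always carry distinct opinions.

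The key step is the following invariant: for a path started from pairwise distinct opinions, at every time $t$ the set of vertices sharing a given opinion is an interval $\{x_i, x_{i+1}, \ldots, x_j\}$ of consecutive vertices of the path. I would prove this by induction over the arrival times of the graphical representation, which are almost surely finitely many in any bounded time interval and between which the configuration is constant. At time $0$ the claim is immediate since the opinions are distinct, so each opinion class is a single vertex. For the inductive step, an arrival on an edge $e_k = (x_k, x_{k+1})$ has no effect unless $0 < |\xi_{t^-} (e_k)| < \ep$, in which case one endpoint adopts the opinion of the other; by the reflection symmetry of the path we may assume $x_{k+1}$ adopts the opinion $a := \eta_{t^-} (x_k)$ and relinquishes its former opinion $b \ne a$. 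By the inductive hypothesis the $a$-class is an interval containing $x_k$ but not $x_{k+1}$, hence equal to some $\{x_l, \ldots, x_k\}$, and the $b$-class is an interval containing $x_{k+1}$ but not $x_k$, hence equal to some $\{x_{k+1}, \ldots, x_m\}$. After the update the $a$-class becomes $\{x_l, \ldots, x_{k+1}\}$ and the $b$-class becomes $\{x_{k+2}, \ldots, x_m\}$, possibly empty; both are still intervals, and no other opinion class changes. This proves the invariant.

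To conclude, observe that since each opinion class collects every vertex carrying that value and is an interval, its interior edges are empty while its two boundary edges are not, so it coincides with a single maximal run of consecutive vertices joined by empty edges, i.e. with one component of the empty-edge subgraph; conversely each such component carries exactly one opinion. The opinion classes are therefore in bijection with the components, so $\card\,\{\eta_t (x) : x \in V\} = T$, which together with $\card\,\{e \in E : \xi_t (e) = 0\} = N - T$ gives the asserted equality. The only genuinely delicate point is the inductive step --- checking that both the class that grows and the class that shrinks remain intervals under an elementary update --- and it works precisely because on a path an update is simply one vertex copying an adjacent vertex. It is exactly this feature that breaks down at a vertex of degree $3$, where a vertex may copy one neighbor while lying between two other neighbors that already agree, destroying the interval structure and hence the equality; this is the sense in which the lemma is sharp.
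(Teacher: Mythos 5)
Your proof is correct, but it takes a genuinely different route from the paper's. The paper argues temporally: it defines an ``opinion lost at time $t$'' and a ``weight lost at time $t$,'' shows that every loss of an opinion forces a loss of a weight, and combines this with the observation that on a path the number of empty edges is nondecreasing (at each update one edge becomes empty and at most one other edge changes) to get the inequality $\card\,\{\eta_t (x)\} + \card\,\{e : \xi_t (e) = 0\} \geq N$, the reverse inequality coming from Lemma \ref{tree}. You instead establish a structural invariant --- on a path started from distinct opinions, every opinion class is an interval of consecutive vertices --- by induction over the arrival times of the graphical representation, and then read off that the opinion classes are exactly the connected components of the empty-edge forest, so that $\card\,\{\eta_t (x)\} = T$ and $\card\,\{e : \xi_t (e) = 0\} = N - T$ simultaneously. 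Your inductive step is sound: the class that grows absorbs an adjacent endpoint and the class that shrinks loses an endpoint, so both remain intervals, and the verification that interior edges of a class are empty while boundary edges are not is immediate. What your approach buys is a sharper picture of the configuration at all times (the equality falls out of the bijection between classes and components, with no need for the separate monotonicity claim about empty edges or the almost-sure non-vanishing of summed weights that the paper leans on); what the paper's approach buys is that its loss-counting bookkeeping feeds directly into the urn argument that follows, where the ultimate number of empty edges is identified with the number of updates before absorption. Your closing remark about degree-$3$ vertices correctly identifies why the interval structure, and hence the equality, fails beyond paths, consistent with the counterexample of Figure \ref{fig:trees}.
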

\begin{proof}
 Since the initial opinions are all distinct,
 $$ \card \,\{\eta_0 (x) : x \in V \} \ = \ N \quad \hbox{and} \quad \card \,\{e \in E : \xi_0 (e) = 0 \} \ = \ 0. $$
 Hence, the property to be proved is true at time 0.
 To prove that it holds at any time, we introduce the following two definitions.
 We say that an opinion is lost at time $t$ if the last representative of an opinion mimics one of its neighbors at time $t$,
 which is equivalent to
 $$ \card \,\{\eta_t (x) : x \in V \} \ = \ \card \,\{\eta_{t-} (x) : x \in V \} \ - \ 1. $$
 Similarly, we say that a weight is lost at time $t$ if an update occurs at vertex $x$ at time $t$ when all the edges
 incident to $x$ have weight different from 0, which is equivalent to
 $$ \card \,\{e \in E : \xi_t (e) = 0 \} \ \geq \ \card \,\{e \in E : \xi_{t-} (e) = 0 \} \ + \ 1. $$
 Now, assume that an opinion is lost at time $t$.
 Then, there exists $x \in V$ such that
 $$ \hbox{(i)}  \ \ \eta_{t-} (x) \neq \eta_{t-} (z) \quad \hbox{for all} \ z \neq x \quad \hbox{and} \quad
    \hbox{(ii)} \ \ \eta_t (x) = \eta_t (y) \quad \hbox{for some} \ y \sim x. $$
 Applying property (i) above for all $z \sim x$ and letting $e = (x, y)$, it follows that
\begin{enumerate}
 \item $\xi_{t-} (e') \neq 0$ for all edges $e' \in E$ with one endpoint at $x$ and \vspace{4pt}
 \item $\xi_t (e) = \eta_t (y) - \eta_t (x) = 0$,
\end{enumerate}
 which implies that a weight is lost at time $t$.
 Also, we observe that at each update of the process one edge becomes empty.
 Because on a path graph the weight of at most two edges can change simultaneously, it follows that the number of empty edges
 cannot decrease.
 In conclusion, since each time an opinion is lost a weight is also lost, and the number of empty edges cannot decrease,
 the number of opinions lost up to time $t$ is smaller than the number of weights lost up to time $t$ which, together with
 the fact that the initial number of opinions is $N$, implies that
 $$ \card \,\{\eta_t (x) : x \in V \} \ + \ \card \,\{e \in E : \xi_t (e) = 0 \} \ \geq \ N. $$
 The reverse inequality follows from Lemma \ref{tree}.
 This completes the proof.
\end{proof} \\ \\
\noindent Note that the assumption that the graph is a path-like graph is necessary as shown by the simple counter-example
 of Figure \ref{fig:trees}.
 In this example, the opinion dynamics start with five independent uniformly distributed opinions satisfying the condition in the
 caption.
 Dashed lines refer to empty edges, continuous lines to edges of type 1, and dotted lines to edges with absolute weight more than $\ep$.
 The final configuration on the right-hand side is an absorbing state with only two empty edges and two distinct opinions while the
 graph has five vertices.
 Note also that the previous lemma indicates that, on a path, an opinion is lost at time $t$ if and only if exactly one weight is
 lost at time $t$, therefore when the weight of an edge of type 1 is added to the weight of another edge, the resulting weight is
 almost surely different from 0.
 More generally,
 $$ \xi_t (e) \ + \ \xi_t (e') \ = \ 0 \quad \hbox{implies that} \quad \xi_t (e) \ = \ \xi_t (e') \ = \ 0 \quad \hbox{for all} \ e, e' \in E. $$
 This result can be proved directly without invoking Lemma \ref{path} by using the one-dimensionality of the graph and the fact that
 each weight is the difference of two values chosen from a finite set of independent continuous random variables. \\

\begin{figure}[t]
\centering
\scalebox{0.24}{\input{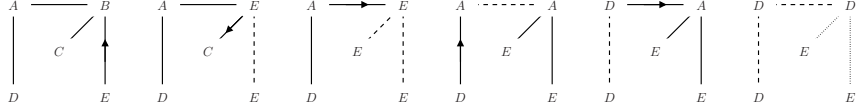}}
\caption{\upshape{Realization on a tree with $|E - A|$, $|E - B|$, $|E - C|$, $|D - A| < \ep < |E - D|$.}}
\label{fig:trees}
\end{figure}


\indent{\bf Connection with a simple urn problem.}
 We assume from now on that the graph is a path-like graph, and focus on the proof of our last
 result, Theorem \ref{coexistence}.
 To deduce the theorem from Lemma \ref{path}, it suffices to find a suitable upper bound for the ultimate number of empty edges.
 The idea is to study the maximum number of weights that are lost before absorption over all the possible realizations of the
 process by ignoring the spatial structure, i.e., at each update, an edge of type 1 along with another edge are chosen uniformly
 at random to interact. Let
 $$ X_t (j) \ = \ \card \,\{e \in E : (j - 1) \,\ep < |\xi_t (e)| < j \ep \} \quad \hbox{for} \ j = 1, 2, \ldots, J = \integer{\ep^{-1}} $$
 denote the number of edges of type $j$, and $X_t (0)$ the number empty edges at time $t$.
 The first step is to bound the initial number of edges of any type, as shown in the following lemma.
\begin{lemma}
\label{binomial}
 Let $J = \integer{\ep^{-1}}$ and $M = \integer{4 \ep N} - 1$. Then,
 $$ P_{\ep} \,(X_0 (j) > M) \ \leq \ \exp (- \ep N) \quad \hbox{for all} \ j = 1, 2, \ldots, J. $$
\end{lemma}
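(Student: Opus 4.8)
The statement concerns the configuration at time $0$ only, so the opinion dynamics play no role here: the lemma is a large-deviation estimate for the number $X_0(j)$ of edges of type $j$ in the initial random coloring, and the name of the lemma already suggests comparing $X_0(j)$ to a binomial variable. The plan is (i) to bound the probability that a single edge is of type $j$, (ii) to handle the mild dependence between edges that share a vertex, and (iii) to apply a Chernoff-type bound, treating two degenerate ranges of parameters separately.

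For step (i), I would observe that for an edge $e=(x_i,x_{i+1})$ the weight $\xi_0(e)=\eta_0(x_{i+1})-\eta_0(x_i)$ is the difference of two independent Uniform$[0,1]$ variables, hence has the triangular density $t\mapsto 1-|t|$ on $[-1,1]$, which is at most $1$. Since $\{(j-1)\ep<|\xi_0(e)|<j\ep\}$ is the union of two intervals of length at most $\ep$, the probability $p_j$ that $e$ is of type $j$ satisfies $p_j\le 2\ep$, and this holds uniformly in $j$.

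For step (ii), the subtle point is that consecutive edges of the path share a vertex, so the indicators $Y_i=\ind\{e_i\ \hbox{is of type}\ j\}$ are not independent; in fact they are \emph{positively} correlated (a vertex opinion near $1/2$ makes both incident edges more likely to be short), so no off-the-shelf negative-association argument is available. I would instead reveal the vertex opinions $\eta_0(x_1),\eta_0(x_2),\ldots$ from left to right: conditionally on $\eta_0(x_1),\ldots,\eta_0(x_i)$ the event $\{e_i\ \hbox{is of type}\ j\}$ still has conditional probability at most $2\ep$ because $\eta_0(x_{i+1})$ is an independent uniform variable. A short induction over an ordered set of edges then yields
$$ P_{\ep} \,(\hbox{every edge of}\ A\ \hbox{is of type}\ j) \ \le \ (2\ep)^{|A|} \qquad \hbox{for every}\ A\subset E, $$
so that all factorial moments of $X_0(j)$, and hence its probability generating function $E_{\ep}[z^{X_0(j)}]$ for $z\ge 1$, are dominated by those of a $\bin(N-1,2\ep)$ variable; consequently the upper-tail Chernoff bound for $\bin(N-1,2\ep)$ applies verbatim to $X_0(j)$.

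For step (iii), since $M+1=\integer{4\ep N}$ is of order twice the mean $2\ep(N-1)$, that Chernoff bound decays exponentially in $N$ and gives $P_{\ep}(X_0(j)>M)\le\exp(-\ep N)$. The two boundary regimes are handled by hand: if $\ep\ge 1/4$ then $M=\integer{4\ep N}-1\ge N-1\ge X_0(j)$, so the probability is $0$; and if $\integer{4\ep N}=1$, i.e. $M=0$, a first-moment bound $P_{\ep}(X_0(j)\ge 1)\le (N-1)\,p_j\le 2\ep(N-1)\le\exp(-\ep N)$ suffices. The genuinely delicate step is the dependence in (ii); once the reduction to an honest binomial is in place, step (iii) is a routine Chernoff computation, and in any case an exponential rate $\exp(-c\,\ep N)$ with some constant $c>0$ is all that is ultimately needed downstream for Theorem \ref{coexistence}.
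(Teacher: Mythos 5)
Your proposal is correct and follows essentially the same route as the paper: bound the conditional probability that a fixed edge is of type $j$ by $2\ep$ given one endpoint, use the absence of cycles (revealing vertices one at a time) to dominate $X_0(j)$ stochastically by a $\bin(N-1,2\ep)$ variable, and finish with a Chernoff bound; your detour through factorial moments and the generating function is just a more explicit justification of the domination that the paper asserts directly. The only caveat, shared by the paper's own one-line large-deviation estimate, is the exact constant in the exponent: the optimal Chernoff rate for $P(\bin(N-1,2\ep)\geq 4\ep N)$ is about $(4\ln 2-2)\,\ep N\approx 0.77\,\ep N$ rather than $\ep N$, so strictly one should either enlarge $M$ or weaken the exponent to $\exp(-c\,\ep N)$ — which, as you correctly note, is all that Theorem \ref{coexistence} requires.
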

\begin{proof}
 We first fix $j$ and observe that, since the random variables $\eta_0 (x)$, $x \in V$, are uniformly distributed in the
 interval $[0, 1]$, we have
 $$ \begin{array}{l}
     P_{\ep} \,((j - 1) \,\ep < |\xi_0 (e)| < j \ep \ | \ \eta_0 (e_1) = a) \ = \
     P_{\ep} \,((j - 1) \,\ep < |\eta_0 (e_2) - a| < j \ep) \vspace{4pt} \\ \hspace{40pt} \leq \
     P_{\ep} \,(a + (j - 1) \,\ep < \eta_0 (e_2) < a + j \ep) \ + \ P_{\ep} \,(a - j \ep < \eta_0 (e_2) \vspace{4pt} \\ \hspace{40pt} < \
     a - (j - 1) \,\ep) \ \leq \ 2 \ep \end{array} $$
 for all $e = (e_1, e_2) \in E$ and $a \in [0, 1]$.
 This, together with the fact that the graph does not contain any loops and that the initial opinions are independent,
 implies that the initial number of edges of type $j$ is stochastically smaller than the binomial random variable
 $X \sim \bin \,(N - 1, 2 \ep)$.
 Note that the first parameter corresponds to the number of edges of the graph which, in
 the case of a path and more generally a tree, is equal to the number of vertices minus one.
 In particular, it follows from large deviation estimates for the Binomial distribution that
 $$ \begin{array}{rcl}
     P_{\ep} \,(X_0 (j) > M) & \leq & P \,(X > \integer{4 \ep N} - 1) \vspace{4pt} \\ & \leq &
        \exp (- 2 N \ep^2 / (2 \ep)) \ \leq \ \exp (- \ep N). \end{array} $$
 This completes the proof.
\end{proof} \\

\begin{figure}[t]
\centering
\includegraphics[width = 340pt]{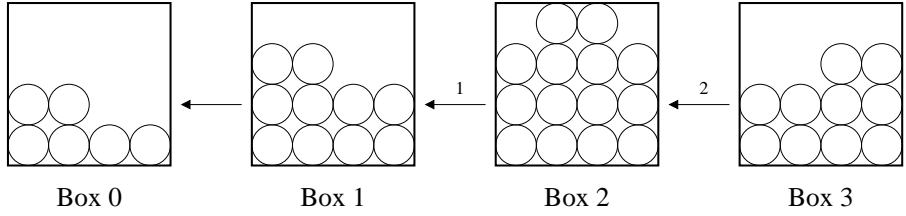}
\caption{\upshape{Schematic illustration of strategy $S$}}
\label{fig:boxes}
\end{figure}

\noindent Now, we observe that an interaction between an edge of type 1 and an empty edge does not affect the number of edges of
 each type.
 Therefore, since we now ignore the structure of the network, we shall also ignore such events, i.e., each realization of the
 process corresponds to a random sequence of edges of type 1 becoming empty interacting with non-empty edges.
 In particular, using that each time a weight is lost, the number of empty edges increases by 1 according to Lemma \ref{path},
 the ultimate number of empty edges is also equal to the number of updates before absorption of the process.
 Also observe that an interaction between an edge of type 1 and an edge of type $j$ results in the first edge becoming empty and
 the second edge becoming of type either $j - 1$ or $j$ or $j + 1$.

\indent To keep track of the process $X_t$ we now play the following game:
 we consider a set of $J + 1$ boxes, labelled from box 0 to box $J$, with box $j \neq 0$ containing initially a certain random number
 of balls and box 0 being initially empty.
 Then, at each time step, we do the following.
\begin{enumerate}
 \item Edge becoming empty: move a ball from box 1 to box 0, \vspace{3pt}
 \item Second interacting edge: choose a box $j \neq 0$ uniformly at random from the set of all the non-empty boxes, \vspace{3pt}
 \item Updated weight: move a ball from this box to one of the three boxes $j - 1$, $j$ or $j + 1$ chosen
  uniformly at random with probability $1/3$.
\end{enumerate}
 The game stops when box 1 is empty.
 We now think of the number of balls in box $j \neq 0$ as the number of edges of type $j$, and the number of balls in box 0 as the
 number of empty edges, and observe that standard coupling arguments imply that the number of steps to empty box 1 increases stochastically
 if we add more balls at time 0.
 In particular, the reasoning above implies that, given that the initial number of edges of each type is at most $M = \integer{4 \ep N} - 1$,
 the ultimate number of empty edges is bounded by the maximum number of steps over all the possible realizations of the game to
 empty box 1 when starting with $M$ balls in each box $j \neq 0$, which is reached for the strategy that will keep box 1 non-empty as long
 as possible.
 Since in addition at least $j - 1$ steps are required to move a ball from box $j$ to box 1 (exactly $j - 1$ steps by always moving the
 ball to the left), the maximum number of steps to empty box 1 over all the possible realizations of the game is reached when following the
 deterministic strategy $S$ that consists at each step in
\begin{enumerate}
 \item Moving a ball from box 1 to box 0, \vspace{3pt}
 \item Choosing the non-empty box with the lowest label $j \geq 2$, \vspace{3pt}
 \item Taking a ball from this box and moving it to box $j - 1$.
\end{enumerate}
 Letting $Y_n (j)$ denote the deterministic number of balls in box $j$ at step $n$ under strategy $S$ when starting with exactly $M$
 balls in each box $j \neq 0$, we obtain the following values:
 $$ Y_n (1) = M \quad \hbox{and} \quad Y_n (2) = M - n $$
 for all $n = 0, 1, \ldots, M$, and
 $$ Y_{n + M} (1) = M - \integer{n / 2}, \quad Y_{n + M} (2) = \ind \{n \ \hbox{odd} \} \quad \hbox{and} \quad Y_{n + M} (3) = M - \integer{n / 2} $$
 for all $n = 1, \ldots, 2 M$.
 Balls from box $j \geq 4$ are never moved and the game halts after $3 M$ steps, which implies that the event that
 $$ \lim_{t \to \infty} \ X_t (0) > 3 M \quad \hbox{and} \quad X_0 (j) \leq M \ \ \hbox{for} \ j = 1, 2, 3 $$
 is the empty event.
 This and Lemmas \ref{path} and \ref{binomial} allow to conclude that
 $$ \begin{array}{l}
     P_{\ep} \,(\nu_{\ep} (G) < (1 - 12 \ep) N) \ \leq \
     P_{\ep} \,(\nu_{\ep} (G) < N - 3 M) \ \leq \
     P_{\ep} \,(\lim_{t \to \infty} X_t (0) > 3 M) \vspace{4pt} \\ \hspace{20pt} \leq \
     P_{\ep} \,(\lim_{t \to \infty} X_t (0) > 3 M \ \hbox{and} \ X_0 (j) \leq M \ \hbox{for} \ j = 1, 2, 3) \vspace{4pt} \\ \hspace{60pt} + \
     P_{\ep} \,(\lim_{t \to \infty} X_t (0) > 3 M \ \hbox{and} \ X_0 (j) > M \ \hbox{for some} \ j = 1, 2, 3) \vspace{4pt} \\ \hspace{20pt} \leq \
     P_{\ep} \,(\varnothing) \ + \
     P_{\ep} \,(\lim_{t \to \infty} X_t (0) > 3 M \ \hbox{and} \ X_0 (j) > M \ \hbox{for some} \ j = 1, 2, 3) \vspace{4pt} \\ \hspace{20pt} \leq \
     0 \ + \ 3 \ \exp (- \ep N). \end{array} $$
 This completes the proof of Theorem \ref{coexistence}. \\


\noindent\textbf{Acknowledgment}.
 The author would like to thank an anonymous referee for pointing out several mistakes in a preliminary version.


\bibliographystyle{alea2}
\bibliography{07-01}

\begin{thebibliography}{11}
\providecommand{\natexlab}[1]{#1}
\providecommand{\url}[1]{\texttt{#1}}
\providecommand{\urlprefix}{URL }
\expandafter\ifx\csname urlstyle\endcsname\relax
  \providecommand{\doi}[1]{doi:\discretionary{}{}{}#1}\else
  \providecommand{\doi}{doi:\discretionary{}{}{}\begingroup
  \urlstyle{rm}\Url}\fi
\providecommand{\eprint}[2][]{\url{#2}}

\bibitem[{Axelrod(1997)}]{axelrod_1997}
R.~Axelrod.
\newblock The dissemination of culture: a model with local convergence and
  global polarization.
\newblock \emph{J. Conflict Resolut.} \textbf{41}, 203--226 (1997).

\bibitem[{Castellano et~al.(2009)Castellano, Fortunato and
  Loreto}]{castellano_fortunato_loreto_2009}
C.~Castellano, S.~Fortunato and V.~Loreto.
\newblock Statistical physics of social dynamics.
\newblock \emph{Reviews of Modern Physics} \textbf{81}, 591--646 (2009).

\bibitem[{Castellano et~al.(2000)Castellano, Marsili and
  Vespignani}]{castellano_all_2000}
C.~Castellano, M.~Marsili and A.~Vespignani.
\newblock Nonequilibrium phase transition in a model for social influence.
\newblock \emph{Phys. Rev. Lett.} \textbf{85}, 3536--3539 (2000).

\bibitem[{Clifford and Sudbury(1973)}]{clifford_sudbury_1973}
P.~Clifford and A.~Sudbury.
\newblock A model for spatial conflict.
\newblock \emph{Biometrika} \textbf{60}, 581--588 (1973).
\newblock \href{http://www.ams.org/mathscinet-getitem?mr=343950}{MR343950}.

\bibitem[{Cox(1989)}]{cox_1989}
J.~T. Cox.
\newblock Coalescing random walks and voter model consensus times on the torus
  in {${\mathbb Z}\sp d$}.
\newblock \emph{Ann. Probab.} \textbf{17}, 1333--1366. (1989).
\newblock \href{http://www.ams.org/mathscinet-getitem?mr=MR1048930}{MR1048930}.

\bibitem[{Gonz\'alez-Avella et~al.(2005)Gonz\'alez-Avella, Cosenza and
  Tucci}]{gonzalez_all_2005}
J.~C. Gonz\'alez-Avella, M.~G. Cosenza and K.~Tucci.
\newblock Nonequilibrium transition induced by mass media in a model for social
  influence.
\newblock \emph{Phys. Rev. E} \textbf{72}, 065102 (2005).
\newblock \href{http://www.ams.org/mathscinet-getitem?mr=MR1048930}{MR1048930}.

\bibitem[{Harris(1972)}]{harris_1972}
T.~E. Harris.
\newblock Nearest neighbor markov interaction processes on multidimensional
  lattices.
\newblock \emph{Adv. Math.} \textbf{9}, 66--89 (1972).
\newblock \href{http://www.ams.org/mathscinet-getitem?mr=MR0307392}{MR0307392}.

\bibitem[{Holley and Liggett(1975)}]{holley_liggett_1975}
R.~A. Holley and T.~M. Liggett.
\newblock Ergodic theorems for weakly interacting systems and the voter model.
\newblock \emph{Ann. Probab.} \textbf{3}, 643--663 (1975).
\newblock \href{http://www.ams.org/mathscinet-getitem?mr=MR0402985}{MR0402985}.

\bibitem[{Klemm et~al.(2003)Klemm, Egu\`iluz, Toral and
  San~Miguel}]{klemm_all_2003}
K.~Klemm, V.~M. Egu\`iluz, R.~Toral and M.~San~Miguel.
\newblock Role of dimensionality in axelrod's model for the dissemination of
  culture.
\newblock \emph{Physica A} \textbf{327}, 1--5 (2003).
\newblock \href{http://www.ams.org/mathscinet-getitem?mr=MR2027091}{MR2027091}.

\bibitem[{Vazquez and Redner(2007)}]{vazquez_redner_2007}
F.~Vazquez and S.~Redner.
\newblock Non-monotonicity and divergent time scale in axelrod model dynamics.
\newblock \emph{EPL} \textbf{78}, 18002 (2007).

\bibitem[{Vilone et~al.(2002)Vilone, Vespignani and
  Castellano}]{vilone_all_2002}
D.~Vilone, A.~Vespignani and C.~Castellano.
\newblock Ordering phase transition in the one-dimensional axelrod model.
\newblock \emph{Eur. Phys. J. B} \textbf{30}, 399--406 (2002).

\end{thebibliography}

\end{document}